\newtheorem{theorem}{Theorem}[section]
\newtheorem{proposition}[theorem]{Proposition}
\newtheorem{lemma}[theorem]{Lemma}
\newtheorem{corollary}[theorem]{Corollary}
\newtheorem{conjecture}[theorem]{Conjecture}
\theoremstyle{definition}
\newtheorem{definition}[theorem]{Definition}
\newtheorem{example}[theorem]{Example}
\newtheorem{remark}[theorem]{Remark}
\title{A combinatorial approach to Kontsevich's Swiss cheese conjecture}
\author[Florian De Leger]{Florian De Leger}
\email{fdeleger@mail.ntua.gr}
\begin{document}
	
\begin{abstract}
	From a coloured operad $\mathcal{P}$ and a $\mathcal{P}$-algebra $A$, we construct a new operad $\mathrm{SC}(\mathcal{P})$ and a Hochschild object $\mathrm{Hoch}(A)$ together with an $\mathrm{SC}(\mathcal{P})$-action on the pair $(\mathrm{Hoch}(A),A)$. We prove that if $\mathcal{P}$ is the little $n$-disks operad, then $\mathrm{SC}(\mathcal{P})$ is equivalent to the Swiss cheese operad $\mathcal{SC}_n$.  This gives us a weak version of Kontsevich's Swiss cheese conjecture (without the universal property). We apply our result to prove the existence of an $E_{n+1}$-action on the Hochschild-Pirashvili cochain of order $n$ of a commutative algebra.
\end{abstract}
	
\maketitle

\tableofcontents

\section*{Introduction}

The \emph{Swiss cheese conjecture} was proposed by Kontsevich \cite[Claim 1]{kontsevich}. It states that for any $n$-algebra $A$ (that is, an algebra over the little $n$-disks operad), there is a \emph{universal} $n+1$-algebra $\mathrm{Hoch}(A)$ which \emph{acts} on $A$. Here, we say that a $n+1$-algebra $B$ \emph{acts} on a $n$-algebra $A$ if there is an action of the Swiss cheese operad $\mathcal{SC}_n$ \cite{voronov} on the pair $(B,A)$, extending the existing algebra structures on $A$ and $B$. $\mathrm{Hoch}(A)$ is \emph{universal} in the sense that it is the terminal object in the homotopy category of $n+1$-algebras equipped with an action on $A$. If $n=1$ and $A$ is an associative algebra, then $\mathrm{Hoch}(A)$ is the classical Hochschild cochain of $A$ and one recovers Deligne's conjecture.

Despite its name, several proofs of Kontsevich's conjecture already exist \cite{bataninshoikhet,dolgushev,hkv,thomas}. In \cite{dolgushev}, the authors defined the operad $\O$ whose algebras are multiplicative non-symmetric operads $\mathcal{X}$ equipped with an algebra over $\mathcal{X}$. They then proved \cite[Theorem 2.1]{dolgushev} that the condensation of $\O$, as defined in \cite[Section 1]{BataninBergerLattice}, yields Voronov's Swiss cheese operad. This implies Kontsevich's conjecture for $n=1$.

Our goal is to extend their approach to any $n \geq 1$. To this end, we will extend the construction of the operad $\O$, using the Baez-Dolan \emph{plus construction} \cite{baezdolan}. More precisely, for a coloured categorical operad $\mathcal{P}$, we will define the categorical operad $\mathcal{P}_{sc}^+$ whose algebras are $\mathcal{P}^+$-algebras $\mathcal{X}$, where $\mathcal{P}^+$ is the Baez-Dolan plus construction, equipped with an \emph{internal $\mathcal{P}^+$-algebra} \cite[Definition 7.2]{batanin} and an algebra over $\mathcal{X}$. When $\mathcal{P}=\mathcal{A}ss$ is the operad for associative algebras, $\mathcal{P}^+_{sc}$ is the lax version of the operad $\O$.

The main result of this paper is Theorem \ref{theoremswisscheese}. It states that, when $\mathcal{P}$ is the little $n$-disks operad, the condensation of $\mathcal{P}_{sc}^+$, written $\mathrm{SC}(\mathcal{P})$, is equivalent to the $n+1$-dimensional Swiss cheese operad. Our method to get this result is extremely similar to the one used in \cite{BataninBergerLattice}. We will work with the \emph{relative complete graph operad} defined in \cite[Section 3.2]{quesney}. It is the categorical version of Voronov's topological Swiss cheese operad, in the sense that applying the classifying space functor pointwise to it yields the Swiss cheese operad. The main ingredient in our proof of Theorem \ref{theoremswisscheese} is the construction of a lax morphism of categorical operads involving the relative complete graph operad. This lax morphism is called \emph{complexity map} by analogy with the morphism defined in \cite[Proposition 3.4]{BataninBergerLattice}. We then prove that this complexity map induces the desired weak equivalence between operads. The intuitive idea behind this weak equivalence is that the Baez-Dolan plus construction, which involves trees, creates an extra dimension. Applying Theorem \ref{theoremswisscheese}, we will prove the existence of an $E_{n+1}$-action on the Hochschild-Pirashvili cochain of order $n$ of a commutative algebra. This answers a question raised in \cite[Remark 2.19]{BataninBergerLattice}.

This paper is organised as follows.

In Section \ref{sectionpreliminaries}, we recall the notions of coloured operads and their algebras. We also recall the notion of a \emph{Swiss cheese type} operad, which is a particular case of coloured operad. We give two important examples of Swiss cheese type operads: Voronov's Swiss cheese operad and Quesney's relative complete graph operad. Finally, we recall two constructions for coloured operads: the Baez-Dolan plus construction and Batanin-Berger's condensation. Both constructions are needed to define our Swiss cheese construction. Note that we slightly extend each construction. Indeed, we consider a categorical version of the Baez-Dolan plus construction. Regarding the condensation of a coloured operad, we include the case where the condensed operad may be multi-coloured. More precisely, for a $C$-coloured operad $\mathcal{P}$ and a function $f: C \to D$, we define a $D$-coloured operad as the \emph{condensation of $\mathcal{P}$ along $f$}. We recover the classical definition when $D$ is the singleton set and $f$ is the unique map.

In Section \ref{sectionswisscheeseconstruction}, we introduce a Swiss cheese construction $\mathrm{SC}(\mathcal{P})$ for a coloured operad $\mathcal{P}$. We also define a Hochschild object $\mathrm{Hoch}(A)$ for a $\mathcal{P}$-algebra $A$. We prove that the $\mathcal{P}$-action on $A$ can be extended to an $\mathrm{SC}(\mathcal{P})$-action on the pair $(\mathrm{Hoch}(A),A)$. We conjecture a universal property for $\mathrm{Hoch}(A)$.

Section \ref{sectionkontsevichconjecture} is dedicated to proving Theorem \ref{theoremswisscheese}.

Finally, Section \ref{sectionhochschildpirashvili} concerns the Hochschild-Pirashvili cochains. We prove that our definition of $\mathrm{Hoch}(A)$ coincides with this classical notion when $A$ is a commutative algebra. Thanks to Theorem \ref{theoremswisscheese}, this implies the existence of an $E_{n+1}$-action on the Hochschild-Pirashvili cochains of order $n$.

\subsection*{Acknowledgements}

I want to thank Vasileios Aravantinos-Sotiropoulos, Hyeon Tai Jung and Christina Vasilakopoulou for our interesting discussions on this topic. I acknowledge that this work was implemented in the framework of H.F.R.I call “3rd Call for H.F.R.I.’s Research Projects to
Support Faculty Members \& Researchers” (H.F.R.I. Project Number: 23249).

\section{Preliminaries}\label{sectionpreliminaries}

\subsection{Coloured operads and their algebras}\label{subsectioncolouredoperads}

\begin{definition}
	A \emph{coloured operad} $\mathcal{P}$ in a symmetric monoidal category $(\mathcal{E},\otimes,e)$ is given by
	\begin{itemize}
		\item a set of \emph{colours} $C$,
		\item for $k \geq 0$ and $(c_1,\ldots,c_k,c) \in C^{k+1}$, an object
		\[
			\mathcal{P}(c_1,\ldots,c_k;c) \in \mathcal{E},
		\]
		\item for $c \in C$, a map
		\[
			e \to \mathcal{P}(c;c)
		\]
		called \emph{unit},
		\item for $1 \leq i \leq k$, $j \geq 0$, $(c_1,\ldots,c_k,c) \in C^{k+1}$ and $(c'_1,\ldots,c'_j) \in C^j$, a map
		\begin{equation}\label{compositionmap}
			\circ_i: \mathcal{P}(c_1,\ldots,c_k;c) \otimes \mathcal{P}(c'_1,\ldots,c'_j;c_i) \to \mathcal{P}(c_1,\ldots,c_{i-1},c'_1,\ldots,c'_j,c_{i+1},\ldots,c_k;c),
		\end{equation}
		called \emph{multiplication},
		\item for $k \geq 0$ and $(c_1,\ldots,c_k,c) \in C^{k+1}$, an action of the symmetric group
		\[
		\sigma \in \Sigma_k \mapsto \left[\mathcal{P}(c_1,\ldots,c_k;c) \to \mathcal{P}(c_{\sigma^{-1}(1)},\ldots,c_{\sigma^{-1}(k)};c)\right],
		\]
	\end{itemize}
	satisfying associativity, unitality and equivariance axioms.
\end{definition}

\begin{definition}
	A \emph{topological} (resp. \emph{categorical}) operad is a coloured operad in the category of topological spaces (resp. small categories).
\end{definition}

\begin{example}[The endomorphism operad]\label{exampleendomorphismoperad}
	Let $C$ be a set, $\mathcal{M}$ a symmetric monoidal $\mathcal{E}$-category and $A := (A_c)_{c \in C}$ a $C$-indexed collection of objects in $\mathcal{M}$. Let $\mathrm{End}_A$ be the $C$-coloured operad in $\mathcal{E}$ given, for $(c_1,\ldots,c_k;c) \in C^{k+1}$, by
	\[
		\mathrm{End}_A(c_1,\ldots,c_k;c) := \mathcal{M}(A_{c_1} \otimes \ldots \otimes A_{c_k},A_c).
	\]
	The unit is given by the identity and the multiplication is given by composition.
\end{example}

Note that there is an obvious notion of a \emph{morphism of coloured operads}, such that coloured operads form a category.

\begin{definition}
	An algebra $A$ in $\mathcal{M}$ over a $C$-coloured operad $\mathcal{P}$ in $\mathcal{E}$ is given by a $C$-indexed collection $A := (A_c)_{c \in C}$ of objects in $\mathcal{M}$, together with a morphism of operad
	\[
		\mathcal{P} \to \mathrm{End}_A.
	\]
\end{definition}

\subsection{Swiss cheese type operads}

\begin{definition}\cite[Definition 2.11]{quesney}
	A \emph{Swiss cheese type operad} is a coloured operad $\mathcal{P}$ with set of colours $C = C_\mathsf{f} \sqcup C_\mathsf{h}$ such that $\mathcal{P}(c_1,\ldots,c_k;c)$ is empty if $c \in C_\mathsf{f}$ and $c_i \in C_\mathsf{h}$ for some $i \in \{1,\ldots,k\}$.
\end{definition}

\begin{definition}\cite[Definition 8]{kontsevich}\label{definitionswisscheeseaction}
	Let $\mathcal{P}$ be a Swiss cheese type operad with set of colours $C = C_\mathsf{f} \sqcup C_\mathsf{h}$. Let $\mathcal{P}_\mathsf{f}$ and $\mathcal{P}_\mathsf{h}$ be the full suboperads of $\mathcal{P}$ restricted on the sets of colours $C_\mathsf{f}$ and $C_\mathsf{h}$ respectively. A \emph{Swiss cheese action} of an $\mathcal{P}_\mathsf{f}$-algebra $B$ on an $\mathcal{P}_\mathsf{h}$-algebra $A$ is an $\mathcal{P}$-algebra structure on the pair $(B,A)$ which extends the structures of algebras on $B$ and $A$.
\end{definition}

\subsection{Voronov's Swiss cheese operad}\label{sectionvoronov}

\begin{definition}\cite[Section 1]{voronov}
	For $n \geq 1$, the \emph{Swiss cheese operad} $\mathcal{SC}_n$ is a Swiss cheese type topological operad. Its set of colours is $C := \{\mathsf{f},\mathsf{h}\}$. For $k \geq 0$,
	\[
		\mathcal{SC}_n(\underbrace{\mathsf{f},\ldots,\mathsf{f}}_k;\mathsf{f}) = \mathcal{D}_{n+1}(k)
	\]
	is the space of ordered configurations of $k$ disjoint little disks inside the unit $n+1$-dimensional disk. For $(c_1,\ldots,c_k) \in C^k$,
	\[
		\mathcal{SC}_n(c_1,\ldots,c_k;\mathsf{h})
	\]
	is the space of ordered configurations of $k$ disjoint little disks or little half disks inside the unit half disk
	\[
		\{(x_0,\ldots,x_n) \in \mathbb{R}^{n+1} \mid x_0 \geq 0,\ x_0^2+\ldots+x_n^2 \leq 1\},
	\]
	such that for all $i \in \{1,\ldots,k\}$, we have a little disk if $c_i=\mathsf{f}$ and a half disk if $c_i=\mathsf{h}$. For example, the following picture represents an object in $\mathcal{SC}_1(\mathsf{h},\mathsf{f},\mathsf{f},\mathsf{h},\mathsf{h};\mathsf{h})$:
	\[
		\begin{tikzpicture}[scale=1]
			\draw (3,0) -- (2.6,0) arc (0:180:.6) -- (1.2,0) arc (0:180:.9) -- (-1.2,0) arc(0:180:.7) -- (-3,0);
			\draw (.3,0) node[above]{$1$};
			\draw (1.3,1.5) circle (.4) node{$2$};
			\draw (-.6,1.8) circle (.7) node{$3$};
			\draw (-1.9,0) node[above]{$4$};
			\draw (2,0) node[above]{$5$};
			\draw (3,0) arc (0:180:3);
		\end{tikzpicture}
	\]
	The multiplication is given by plugging disks or half disks then reordering.
\end{definition}

\subsection{The relative complete graph operad}

\begin{definition}\cite[Section 3.2]{quesney}
	For $n \geq 1$, the \emph{$n$-th stage filtration $\mathcal{RK}_n$ of the relative complete graph operad} is a Swiss cheese type categorical operad. Its set of colours is again $C := \{\mathsf{f},\mathsf{h}\}$. For $k \geq 0$,
	\[
		\mathcal{RK}_n(\underbrace{\mathsf{f},\ldots,\mathsf{f}}_k;\mathsf{f}) = \mathcal{K}_{n+1}(k)
	\]
	is the poset (and therefore small category) whose
	\begin{itemize}
		\item elements are pairs $(\mu,\sigma)$ with
		\begin{equation}\label{equationmu}
			\mu = (\mu_{ij} \in \{1,\ldots,n+1\})_{1 \leq i < j \leq k}
		\end{equation}
		and $\sigma \in \Sigma_k$,
		\item the poset structure is given by
		\[
			(\mu,\sigma) \leq (\nu,\tau) \Leftrightarrow \text{for all } 1 \leq i < j \leq k, (\mu_{ij},\sigma_{ij})=(\nu_{ij},\tau_{ij}) \text{ or } \mu_{ij}<\nu_{ij},
		\]
		where, for $\sigma \in \Sigma_k$ and $1 \leq i < j \leq k$, $\sigma_{ij} \in \Sigma_2$ is the permutation of $i$ and $j$ by $\sigma$.
	\end{itemize}
	For $(c_1,\ldots,c_k) \in C^k$,
	\[
		\mathcal{RK}_n(c_1,\ldots,c_k;\mathsf{h})
	\]
	is the poset $\mathcal{K}_{n+1}(k)$ with the following extra condition. For $1 \leq i < j \leq k$, $\mu_{ij} \leq n$ if one of the following applies:
	\begin{itemize}
		\item $c_i=c_j=\mathsf{h}$,
		\item $c_i=\mathsf{f}$, $c_j=\mathsf{h}$ and $\sigma_{ij}=id$,
		\item $c_i=\mathsf{h}$, $c_j=\mathsf{f}$ and $\sigma_{ij}=(12)$.
	\end{itemize}
	In order to define the multiplication, note that a collection of integers $\mu$ as in \eqref{equationmu} can be seen as the edge-colouring by numbers $1,\ldots,n+1$ of the complete graph with $k$ vertices. Then $(\mu_1,\sigma_1) \circ_i (\mu_2,\sigma_2) := (\mu_1 \circ_i \mu_2,\sigma_1 \circ_i \sigma_2)$, where $\mu_1 \circ_i \mu_2$ is given by insertion of a graph inside the vertex of another graph then renumbering the vertices, as in the following picture:
	\[
	\begin{tikzpicture}
	\draw[dotted] (0,0) -- (2.309,0) -- (1.155,2) -- (0,0);
	\draw[dotted] (4,1) -- (6,1);
	\draw[dotted] (8,0) -- (10,0);
	\draw[dotted] (10,2) -- (10,0) -- (8,2) -- (8,0) -- (10,2) -- (8,2);
	
	\draw (3,1) node{$\circ_2$};
	\draw (7,1) node{$=$};
	
	\draw[fill=white] (0,0) circle (.24) node{\large{$1$}};
	\draw[fill=white] (2.309,0) circle (.24) node{\large{$3$}};
	\draw[fill=white] (1.155,2) circle (.24) node{\large{$2$}};
	\draw[fill=white] (4,1) circle (.24) node{\large{$1$}};
	\draw[fill=white] (6,1) circle (.24) node{\large{$2$}};
	\draw[fill=white] (8,0) circle (.24) node{\large{$1$}};
	\draw[fill=white] (8,2) circle (.24) node{\large{$2$}};
	\draw[fill=white] (10,2) circle (.24) node{\large{$3$}};
	\draw[fill=white] (10,0) circle (.24) node{\large{$4$}};
	
	\draw (.577,1) node{\small{$4$}};
	\draw (1.18,0) node{\small{$5$}};
	\draw (1.732,1) node{\small{$1$}};
	
	\draw (5,1) node{\small{$2$}};
	
	\draw (9,0) node{\small{$5$}};
	\draw (8,1) node{\small{$4$}};
	\draw (10,1) node{\small{$1$}};
	\draw (8.7,1.3) node{\small{$1$}};
	\draw (9.3,1.3) node{\small{$4$}};
	\draw (9,2) node{\small{$2$}};
	\end{tikzpicture}
	\]
	$\sigma_1 \circ_i \sigma_2$ is the defined the same way as it is defined for the operad $\mathcal{A}ss$ \cite[Example 1.15 (a)]{bergercombinatorial}.
\end{definition}

\begin{theorem}\cite[Theorem 3.5]{quesney}
	The topological operad obtained by applying the classifying space functor pointwise to $\mathcal{RK}_n$ is weakly equivalent to $\mathcal{SC}_n$.
\end{theorem}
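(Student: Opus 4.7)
The plan is to reduce the theorem to Berger's foundational result that $B\mathcal{K}_{n+1}(k)$ is weakly equivalent to $\mathcal{D}_{n+1}(k)$, and then extend the strategy to the Swiss cheese setting. For the components with output $\mathsf{f}$, the statement is exactly Berger's theorem, since $\mathcal{RK}_n(\mathsf{f},\ldots,\mathsf{f};\mathsf{f})=\mathcal{K}_{n+1}(k)$ and $\mathcal{SC}_n(\mathsf{f},\ldots,\mathsf{f};\mathsf{f})=\mathcal{D}_{n+1}(k)$. The real content is therefore the components with output $\mathsf{h}$.

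The key step is to construct, for each tuple $(c_1,\ldots,c_k)\in C^k$, a continuous map $\Phi: \mathcal{SC}_n(c_1,\ldots,c_k;\mathsf{h}) \to B\mathcal{RK}_n(c_1,\ldots,c_k;\mathsf{h})$ that assigns to a configuration a combinatorial invariant $(\mu,\sigma)$, where $\mu_{ij}$ records (with Berger's lexicographic convention) the coordinate direction in which disks $i$ and $j$ are separated, and $\sigma_{ij}$ records their relative order in that direction. I would then stratify $\mathcal{SC}_n(c_1,\ldots,c_k;\mathsf{h})$ by the pre-images of elements of the poset, show each stratum is contractible via a convexity argument on centres and radii, and identify the closure order of strata with the poset order on $\mathcal{RK}_n(c_1,\ldots,c_k;\mathsf{h})$. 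The standard fact that a regular CW complex stratified by a finite poset is weakly equivalent to the classifying space of that poset then yields the desired equivalence componentwise.

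A crucial intermediate claim is that $\Phi$ genuinely lands in $\mathcal{RK}_n$ and not merely in $\mathcal{K}_{n+1}$, i.e. that the Swiss cheese constraint $\mu_{ij}\leq n$ holds in exactly the cases prescribed by the definition. This is a purely geometric observation: two half-disks both have their diameters on the hyperplane $\{x_0=0\}$, so they cannot be separated in the $x_0$-direction and must be separated in one of the remaining $n$ coordinates; the mixed cases are handled by noting that a full disk lies strictly in $\{x_0>0\}$, which forbids the $\sigma_{ij}$ that would correspond to $\mu_{ij}=n+1$ with the wrong relative order.

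Finally, operadic compatibility needs to be checked: the combinatorial composition $(\mu_1,\sigma_1)\circ_i(\mu_2,\sigma_2)$ in $\mathcal{RK}_n$ is designed to mirror the geometric insertion in $\mathcal{SC}_n$, so $\Phi$ should intertwine $\circ_i$ on the level of strata, and equivariantly with respect to $\Sigma_k$. The hard part will be verifying contractibility of strata near the flat boundary of the ambient half-disk, where limiting configurations (half-disks tangent to $\{x_0=0\}$, or centres aligned in some coordinate direction) require a careful parameterization; in effect, Berger's convexity argument in $\mathbb{R}^{n+1}$ has to be adapted to the half-space while keeping the symmetric group action and the operadic structure compatible with all the pointwise weak equivalences.
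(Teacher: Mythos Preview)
The paper does not prove this theorem at all: it is stated with a citation to \cite[Theorem 3.5]{quesney} and used as a black box throughout. There is therefore no ``paper's own proof'' to compare against; your sketch is an attempt to reconstruct Quesney's argument rather than anything appearing in the present paper.

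That said, your outline is broadly the right shape for such a result and is in the spirit of Berger's original comparison of $B\mathcal{K}_{n+1}$ with $\mathcal{D}_{n+1}$, which Quesney extends. A couple of technical cautions if you intend to flesh it out. First, the map $\Phi$ you describe is not a continuous map into $B\mathcal{RK}_n$; it is a (discontinuous) assignment of a poset element to each configuration, and the weak equivalence with the order complex then comes from properties of the resulting stratification (contractibility of \emph{closed} strata, or a good-cover / regular-CW argument), not from $\Phi$ being a map of spaces. Second, Berger's actual argument does not proceed via contractibility of open strata but rather via a cellular filtration on a Fulton--MacPherson type compactification, and the Swiss cheese version requires the analogous compactification for half-disks; the ``convexity of centres and radii'' heuristic you give is suggestive but not a substitute for that. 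Your geometric observation that two half-disks cannot be separated in the $x_0$-direction, and that a full disk lying in $\{x_0>0\}$ forces the correct $\sigma_{ij}$ in the mixed case, is exactly the right reason the assignment lands in $\mathcal{RK}_n$ rather than in all of $\mathcal{K}_{n+1}$.
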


\subsection{Categorical Baez-Dolan plus construction}

In this subsection we extend the Baez-Dolan plus construction \cite{baezdolan} to categorical operads.

\begin{definition}\cite[Section 3.1]{baezdolan}
	A \emph{combed tree} is a planar tree equipped with an extra linear order on its set of leaves:
	\[
	\begin{tikzpicture}
		\draw (0,-.4) -- (0,0) -- (-1.3,1.3) -- (-2.1,3) node[above]{$2$};
		\draw (0,0) -- (1,1) -- (-.1,1.8) -- (-.7,3) node[above]{$4$};
		\draw (-.1,1.8) -- (.2,2.3);
		\draw (1,1) -- (.7,3) node[above]{$1$};
		\draw (1,1) -- (2.1,3) node[above]{$3$};
		\draw[fill] (0,0) circle(1.5pt);
		\draw[fill] (-1.3,1.3) circle(1.5pt);
		\draw[fill] (1,1) circle(1.5pt);
		\draw[fill] (-.1,1.8) circle(1.5pt);
		\draw[fill] (.2,2.3) circle(1.5pt);
	\end{tikzpicture}
	\]
\end{definition}

\begin{definition}\label{definitioncategoryofoperations}
	Let $\mathcal{P}$ be a categorical operad. The category of \emph{operations} of $\mathcal{P}$ is obtained by taking the coproduct of $\mathcal{P}(c_1,\ldots,c_k;c)$ over all $k \geq 0$ and $(c_1,\ldots,c_k,c) \in C^{k+1}$.
\end{definition}

\begin{definition}\cite[Section 3.3]{baezdolan}
	Let $\mathcal{P}$ be an operad in $\mathrm{Cat}$. A \emph{$\mathcal{P}$-tree} is a combed tree equipped with:
	\begin{itemize}
		\item a colour of $\mathcal{P}$ for each edge,
		\item an operation of $\mathcal{P}$ for each vertex,
	\end{itemize}
	such that if a vertex is decorated with $p \in \mathcal{P}(c_1,\ldots,c_k;c)$, its input edges are decorated with $c_1,\ldots,c_k$ and its output edge with $c$.
\end{definition}

\begin{definition}\label{definitiontarget}
	The \emph{target} of a $\mathcal{P}$-tree is the operation of $\mathcal{P}$ obtained by multiplying the operations which decorate each vertex according to the shape of the tree, and applying the symmetric group actions according to the ordering of the leaves. It is well-defined thanks to the associativity and equivariance axioms for $\mathcal{P}$.
\end{definition}

\begin{definition}\label{definitionpplus}
	For a categorical operad $\mathcal{P}$, let $\mathcal{P}^+$ be the operad in $\mathrm{Set}$ whose:
	\begin{itemize}
		\item colours are operations of $\mathcal{P}$,
		\item for $(c_1,\ldots,c_k,c) \in C^{k+1}$, let
		\[
		\mathcal{P}^+(c_1,\ldots,c_k;c)
		\]
		be the set of combed $\mathcal{P}$-trees with linearly ordered set of vertices $\{v_1,\ldots,v_k\}$, such that the operation which decorates $v_i$ is $c_i$, together with an extra morphism from the target of this $\mathcal{P}$-tree to $c$,
		\item multiplication is given by inserting a $\mathcal{P}$-tree inside the vertex of another $\mathcal{P}$-tree, reordering the set of leaves (combing) and composing the extra morphisms.
	\end{itemize}
\end{definition}

\begin{remark}
	If $\mathcal{P}$ is actually discrete then the extra morphisms must be the identity and we recover the classical definition from \cite{baezdolan}.
\end{remark}

\subsection{Coloured functor-operads}

\begin{definition}\cite[Definition 4.1]{mccluresmith}
	Let $C$ be a set and $A := (A_c)_{c \in C}$ be a $C$-indexed collection of $\mathcal{E}$-categories. A \emph{coloured functor-operad} on $A$ is given by
	\begin{itemize}
		\item for $k \geq 0$ and $(c_1,\ldots,c_k,c) \in C^{k+1}$, an $\mathcal{E}$-functor
		\[
			\xi(c_1,\ldots,c_k;c) : A_{c_1} \otimes \ldots \otimes A_{c_k} \to A_c,
		\]
		\item for $1 \leq i \leq k$, $j \geq 0$, $(c_1,\ldots,c_k,c) \in C^{k+1}$ and $(c'_1,\ldots,c'_j) \in C^j$, an $\mathcal{E}$-natural transformation
		\begin{equation}\label{equationfunctoroperads}
		\xi(c_1,\ldots,c_k;c) \circ_i \xi(c'_1,\ldots,c'_j;c_i) \to \xi(c_1,\ldots,c_{i-1},c'_1,\ldots,c'_j,c_{i+1},\ldots,c_k;c),
		\end{equation}
		where $\circ_i$ is the multiplication of the endomorphism operad $\mathrm{End}_A$ defined in Example \ref{exampleendomorphismoperad},
	\end{itemize}
	satisfying associativity, unitality and equivariance axioms.
\end{definition}

\subsection{Realisation of a coloured operad}

\begin{definition}
	For a coloured operad $\mathcal{P}$ in $\mathcal{E}$, let $u\mathcal{P}$ be the \emph{underlying category} of $\mathcal{P}$. It is the $\mathcal{E}$-category whose objects are colours of $\mathcal{P}$ and morphisms are unary operations, that is
	\[
		u\mathcal{P}(c,d) := \mathcal{P}(c;d).
	\]
\end{definition}

Note that $\mathcal{P}$ induces an $\mathcal{E}$-functor
\begin{equation}
	\mathcal{P}(-,\ldots,-;-): u\mathcal{P}^{op} \otimes \ldots \otimes u\mathcal{P}^{op} \otimes u\mathcal{P} \to \mathcal{E}.
\end{equation}
Also, for two $\mathcal{E}$-categories $\mathcal{U}$ and $\mathcal{V}$, let $\mathcal{V}^\mathcal{U}$ be the $\mathcal{E}$-category of $\mathcal{E}$-functors from $\mathcal{U}$ to $\mathcal{V}$.

\begin{definition}\cite[Section 1.7]{BataninBergerLattice}\label{definitionrealization}
	Let $f: C \to D$ be a function between sets and $\mathcal{P}$ be a $C$-coloured operad. For $d \in D$, let $\mathcal{P}_d$ be the full subcategory of $u\mathcal{P}$ restricted to the objects in $f^{-1}(d)$. For $(d_1,\ldots,d_k,d) \in D^{k+1}$, let
	\begin{equation}\label{equationefunctors}
	\xi_\mathcal{P}^f(d_1,\ldots,d_k;d): \mathcal{E}^{\mathcal{P}_{d_1}} \otimes \ldots \otimes \mathcal{E}^{\mathcal{P}_{d_k}} \to \mathcal{E}^{\mathcal{P}_d}
	\end{equation}
	be the $\mathcal{E}$-functor defined by the coend formula
	\[
	\xi_\mathcal{P}^f(d_1,\ldots,d_k;d)(X_1,\ldots,X_k)(c) := \mathcal{P}(-,\ldots,-;c) \otimes_{\mathcal{P}_{d_1} \otimes \ldots \otimes \mathcal{P}_{d_k}} X_1(-) \otimes \ldots \otimes X_k(-).
	\]
\end{definition}

\begin{proposition}\cite[Proposition 1.8]{BataninBergerLattice}
	The $\mathcal{E}$-functors \eqref{equationefunctors} extend to a coloured functor-operad.
\end{proposition}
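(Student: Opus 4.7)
The plan is to construct the required structural natural transformations \eqref{equationfunctoroperads} using the composition maps $\circ_i$ of $\mathcal{P}$ together with the universal property of coends, and then reduce each of the three axioms (associativity, unitality, equivariance) to the corresponding axiom for $\mathcal{P}$.

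First I would unpack the source of \eqref{equationfunctoroperads}. Given functors $X_m \in \mathcal{E}^{\mathcal{P}_{d_m}}$ and $Y_\ell \in \mathcal{E}^{\mathcal{P}_{d'_\ell}}$, evaluating $\xi_\mathcal{P}^f(d_1,\ldots,d_k;d) \circ_i \xi_\mathcal{P}^f(d'_1,\ldots,d'_j;d_i)$ at the tuple $(X_1,\ldots,X_{i-1},Y_1,\ldots,Y_j,X_{i+1},\ldots,X_k)$ and at $c \in \mathcal{P}_d$ gives a nested coend: one $\mathcal{P}_{d_i}$-coend of $\mathcal{P}(-,\ldots,-;c)$ against the $X_m$'s and against an inner $\mathcal{P}_{d'_1}\otimes\cdots\otimes\mathcal{P}_{d'_j}$-coend of $\mathcal{P}(-,\ldots,-;c_i)$ against the $Y_\ell$'s. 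By the Fubini theorem for coends, this is naturally isomorphic to a single coend over $\mathcal{P}_{d_1}\otimes\cdots\otimes\mathcal{P}_{d_{i-1}}\otimes\mathcal{P}_{d_i}\otimes\mathcal{P}_{d'_1}\otimes\cdots\otimes\mathcal{P}_{d'_j}\otimes\mathcal{P}_{d_{i+1}}\otimes\cdots\otimes\mathcal{P}_{d_k}$ of $\mathcal{P}(c_1,\ldots,c_k;c)\otimes\mathcal{P}(c'_1,\ldots,c'_j;c_i)$ tensored with the $X_m$'s and $Y_\ell$'s.

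Second, I would apply the operadic composition \eqref{compositionmap} of $\mathcal{P}$,
\[
\circ_i\colon \mathcal{P}(c_1,\ldots,c_k;c)\otimes\mathcal{P}(c'_1,\ldots,c'_j;c_i)\to\mathcal{P}(c_1,\ldots,c_{i-1},c'_1,\ldots,c'_j,c_{i+1},\ldots,c_k;c),
\]
pointwise inside the coend. The associativity of operadic composition makes this map dinatural in the variable $c_i$, so it factors through the coend quotient over $\mathcal{P}_{d_i}$. The resulting map lands exactly in the value at $c$ of the right-hand side of \eqref{equationfunctoroperads}. Naturality in all remaining arguments is automatic from the coend construction.

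Third, I would verify the three axioms. Each axiom is the equality of two natural transformations built from several copies of the maps just constructed; by the universal property of coends, both sides are determined by their action on the representable wedges, where they reduce to diagrams in the values of $\mathcal{P}$ built purely from the maps $\circ_i$ and the symmetric group actions. These diagrams commute because of the operadic associativity, unitality and equivariance of $\mathcal{P}$. The only real bookkeeping subtlety is ensuring that the intermediate coend in the composite \eqref{equationfunctoroperads} is taken over $\mathcal{P}_{d_i}=f^{-1}(d_i)$ and not over all of $u\mathcal{P}$; this is automatic since the source $\xi_\mathcal{P}^f(d'_1,\ldots,d'_j;d_i)$ takes values in $\mathcal{E}^{\mathcal{P}_{d_i}}$ by definition. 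Apart from this coloured bookkeeping, the argument is a formal extension of \cite[Proposition 1.8]{BataninBergerLattice}, which is the singleton case $D=\{*\}$.
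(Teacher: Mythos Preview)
The paper does not supply its own proof of this proposition; it simply cites \cite[Proposition~1.8]{BataninBergerLattice} and moves on. Your sketch is the standard argument one would expect there---Fubini for coends to flatten the nested realisation, the operadic $\circ_i$ inducing a map out of the coend by dinaturality (which, as you note, follows from associativity of $\circ_i$ with respect to unary operations), and then the reduction of each axiom to the corresponding axiom for $\mathcal{P}$ on the level of wedges. Your remark that the only novelty over the cited $D=\{*\}$ case is the bookkeeping of taking the inner coend over $\mathcal{P}_{d_i}$ rather than all of $u\mathcal{P}$ is exactly right.
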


\subsection{Condensation of a coloured operad}

For an $\mathcal{E}$-category $\mathcal{U}$, let $\underline{\mathrm{Hom}}_\mathcal{U}$ be the $\mathcal{E}$-valued hom of the $\mathcal{E}$-category $\mathcal{E}^\mathcal{U}$.

\begin{definition}
	Let $f: C \to D$ be a function between sets and $\mathcal{P}$ be a $C$-coloured operad. We also assume that we are given an $\mathcal{E}$-functor $\delta: u\mathcal{P} \to \mathcal{E}$. The \emph{$\delta$-condensation of $\mathcal{P}$ along $f$} is given by
	\[
	\mathrm{Coend}_\mathcal{P}^f(d_1,\ldots,d_k;d) := \underline{\mathrm{Hom}}_{\mathcal{P}_d}(\delta,\xi_\mathcal{P}(d_1,\ldots,d_k;d)(\underbrace{\delta,\ldots,\delta}_k)).
	\]
\end{definition}

\begin{proposition}\cite[Proposition 4.4]{mccluresmith}
	$\mathrm{Coend}_\mathcal{P}^f$ has the structure of a $D$-coloured operad.
\end{proposition}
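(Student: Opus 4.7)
The plan is to transfer the coloured functor-operad structure on $\xi_\mathcal{P}^f$ supplied by the previous proposition to an operad structure on $\mathrm{Coend}_\mathcal{P}^f$. To this end, I would begin by unwinding the definition: by the enriched tensor--hom adjunction, a morphism $e \to \mathrm{Coend}_\mathcal{P}^f(d_1,\ldots,d_k;d)$ is the same data as an $\mathcal{E}$-natural transformation
\[
\alpha: \delta \Rightarrow \xi_\mathcal{P}^f(d_1,\ldots,d_k;d)(\delta,\ldots,\delta)
\]
in the $\mathcal{E}$-category $\mathcal{E}^{\mathcal{P}_d}$, so every piece of the operad structure can be built at the level of such natural transformations and then internalised by enriched functoriality.

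For the \emph{unit} at $d \in D$, unitality of the functor-operad identifies $\xi_\mathcal{P}^f(d;d)$ with the identity $\mathcal{E}$-functor on $\mathcal{E}^{\mathcal{P}_d}$, so the identity $\delta \Rightarrow \delta$ transposes to the required map $e \to \mathrm{Coend}_\mathcal{P}^f(d;d)$. For the \emph{$\circ_i$ composition}, given $\alpha$ as above and $\beta: \delta \Rightarrow \xi_\mathcal{P}^f(d'_1,\ldots,d'_j;d_i)(\delta,\ldots,\delta)$, I would form $\alpha \circ_i \beta$ in three moves: first apply the $\mathcal{E}$-functor obtained from $\xi_\mathcal{P}^f(d_1,\ldots,d_k;d)$ by fixing all arguments to $\delta$ except the $i$-th to the transformation $\beta$; next post-compose with the component at $(\delta,\ldots,\delta)$ of the structural natural transformation \eqref{equationfunctoroperads}; finally pre-compose with $\alpha$. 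The output is a natural transformation $\delta \Rightarrow \xi_\mathcal{P}^f(d_1,\ldots,d_{i-1},d'_1,\ldots,d'_j,d_{i+1},\ldots,d_k;d)(\delta,\ldots,\delta)$, and performing this construction parametrically, via the enriched functoriality of $\underline{\mathrm{Hom}}_{\mathcal{P}_d}(\delta,-)$, yields an $\mathcal{E}$-morphism out of the tensor product $\mathrm{Coend}_\mathcal{P}^f(d_1,\ldots,d_k;d) \otimes \mathrm{Coend}_\mathcal{P}^f(d'_1,\ldots,d'_j;d_i)$ that will serve as the operadic composition.

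The \emph{symmetric group action} comes directly from the equivariance isomorphisms for $\xi_\mathcal{P}^f$, which reshuffle the $\delta$ arguments and hence induce an action on the hom object; the \emph{associativity, unitality and equivariance axioms} then follow by pasting two such composites and identifying the two sides using the corresponding axioms of the functor-operad $\xi_\mathcal{P}^f$.

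The hard part is purely bookkeeping: one has to check that the three-move composite respects the interchange between inserting $\beta$ into different slots of $\xi_\mathcal{P}^f(d_1,\ldots,d_k;d)$ and the associativity of the functor-operad structural transformation, so that the resulting $\circ_i$ is well-defined and associative in the sense required of a coloured operad. Once this coherence is unpacked, each operad axiom corresponds exactly to one functor-operad axiom, and no genuinely new input beyond the standard coend calculus is needed.
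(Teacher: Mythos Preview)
The paper does not supply its own proof of this proposition; it is simply quoted from \cite[Proposition 4.4]{mccluresmith} without further argument. Your sketch is the standard way to establish this result and is correct: the operad structure on $\mathrm{Coend}_\mathcal{P}^f$ is obtained precisely by transporting the functor-operad structure on $\xi_\mathcal{P}^f$ through $\underline{\mathrm{Hom}}_{\mathcal{P}_d}(\delta,-)$, with the axioms inherited from those of the functor-operad.
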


\begin{proposition}\cite[Proposition 1.5]{BataninBergerLattice}\label{propositionalgebrascondensation}
	Let $f: C \to D$ be a function between sets, $\mathcal{P}$ a $C$-coloured operad and $A$ a $\mathcal{P}$-algebra. Then the $D$-indexed collection
	\[
		\left(\underline{\mathrm{Hom}}_{\mathcal{P}_d}(\delta,A)\right)_{d \in D}
	\]
	is a $\mathrm{Coend}_\mathcal{P}^f$-algebra.
\end{proposition}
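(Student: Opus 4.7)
The plan is to construct the action maps explicitly from the functor-operad $\xi_\mathcal{P}^f$ together with the structure of $A$, and then verify the algebra axioms by diagram chases that reduce to the functor-operad axioms of $\xi_\mathcal{P}^f$ and to the $\mathcal{P}$-algebra axioms of $A$.

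The first observation is that the $\mathcal{P}$-algebra structure on $A$ restricts, for each $d \in D$, to an object $A|_{\mathcal{P}_d} \in \mathcal{E}^{\mathcal{P}_d}$, and the structure maps $\mathcal{P}(c_1,\ldots,c_k;c) \otimes A_{c_1} \otimes \cdots \otimes A_{c_k} \to A_c$ for $c_i \in f^{-1}(d_i)$, $c \in f^{-1}(d)$ assemble, via the universal property of the coend in Definition \ref{definitionrealization}, into a natural transformation in $\mathcal{E}^{\mathcal{P}_d}$
\[
\alpha(d_1,\ldots,d_k;d) : \xi_\mathcal{P}^f(d_1,\ldots,d_k;d)\bigl(A|_{\mathcal{P}_{d_1}},\ldots,A|_{\mathcal{P}_{d_k}}\bigr) \longrightarrow A|_{\mathcal{P}_d}.
\]
Associativity, unitality and equivariance of $A$ as a $\mathcal{P}$-algebra translate into the analogous compatibilities for the family $\alpha$ with respect to the natural transformations \eqref{equationfunctoroperads} of the functor-operad $\xi_\mathcal{P}^f$. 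In effect, $\alpha$ exhibits the $C$-indexed family $A$ as an algebra over the functor-operad $\xi_\mathcal{P}^f$.

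Writing $B_d := \underline{\mathrm{Hom}}_{\mathcal{P}_d}(\delta, A|_{\mathcal{P}_d})$, I would define the $\mathrm{Coend}_\mathcal{P}^f$-action as the composite
\begin{align*}
\mathrm{Coend}_\mathcal{P}^f(d_1,\ldots,d_k;d) \otimes B_{d_1} \otimes \cdots \otimes B_{d_k} & \longrightarrow \underline{\mathrm{Hom}}_{\mathcal{P}_d}\bigl(\delta,\xi_\mathcal{P}^f(\delta,\ldots,\delta)\bigr) \\
 & \qquad \otimes \underline{\mathrm{Hom}}_{\mathcal{P}_d}\bigl(\xi_\mathcal{P}^f(\delta,\ldots,\delta),\xi_\mathcal{P}^f(A,\ldots,A)\bigr) \\
 & \longrightarrow \underline{\mathrm{Hom}}_{\mathcal{P}_d}\bigl(\delta,\xi_\mathcal{P}^f(A,\ldots,A)\bigr) \\
 & \longrightarrow \underline{\mathrm{Hom}}_{\mathcal{P}_d}(\delta,A|_{\mathcal{P}_d}) = B_d.
\end{align*}
The first arrow uses the definition of $\mathrm{Coend}_\mathcal{P}^f$ on the first factor and the $\mathcal{E}$-functoriality of $\xi_\mathcal{P}^f(d_1,\ldots,d_k;d)$ on the remaining factors (which turns each $B_{d_i}\colon \delta\to A|_{\mathcal{P}_{d_i}}$ into an arrow $\xi_\mathcal{P}^f(\delta,\ldots,\delta)\to\xi_\mathcal{P}^f(A,\ldots,A)$); the second is composition in $\mathcal{E}^{\mathcal{P}_d}$; and the third is postcomposition with $\alpha(d_1,\ldots,d_k;d)$.

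It then remains to check that this family of maps is a morphism of operads $\mathrm{Coend}_\mathcal{P}^f \to \mathrm{End}_B$. Unitality reduces to the unit axiom of $\xi_\mathcal{P}^f$ combined with the unit axiom for $\alpha$, and equivariance reduces to the corresponding equivariance axioms. The main computational step is associativity, which I would verify by expanding both sides of
\[
(x \circ_i y)\cdot (b_1,\ldots,b_{k+j-1}) \;=\; x \cdot (b_1,\ldots,b_{i-1}, y\cdot(b_i,\ldots,b_{i+j-1}), b_{i+j},\ldots,b_{k+j-1})
\]
and reorganising using the natural transformation \eqref{equationfunctoroperads} for $\xi_\mathcal{P}^f$ together with the associativity of $\alpha$ proved above; everything is forced by naturality in $\mathcal{E}^{\mathcal{P}_d}$. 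The main obstacle is purely bookkeeping: keeping track of which hom-object each factor lives in and ensuring that the natural transformations of $\xi_\mathcal{P}^f$ and the action $\alpha$ are inserted in the right order. Since $\xi_\mathcal{P}^f$ is a functor-operad and $\alpha$ is an algebra over it, this diagram chase closes formally, giving the desired $\mathrm{Coend}_\mathcal{P}^f$-algebra structure on $(B_d)_{d\in D}$.
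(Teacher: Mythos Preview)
The paper does not give its own proof of this proposition: it is stated with a citation to \cite[Proposition 1.5]{BataninBergerLattice} and no argument is supplied. Your proposal is correct and is essentially the standard argument behind that cited result (and behind \cite[Proposition 4.4]{mccluresmith}, which the paper also invokes without proof): first observe that the $\mathcal{P}$-algebra $A$ becomes an algebra over the functor-operad $\xi_\mathcal{P}^f$ via the coend universal property, and then use the general fact that for any functor-operad algebra the $\underline{\mathrm{Hom}}(\delta,-)$ construction produces an algebra over the associated coendomorphism operad. There is nothing to compare against in the present paper, but your write-up matches the expected content of the reference.
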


\section{Swiss cheese construction for coloured operads}\label{sectionswisscheeseconstruction}

\subsection{Operad for lax pointed categorical algebras}

\begin{definition}
	For a coloured operad $\mathcal{P}$ in $\mathrm{Set}$, let $\mathcal{P}_*$ be the categorical operad for categorical $\mathcal{P}$-algebras $A$ equipped with an \emph{internal algebra} \cite[Definition 7.2]{batanin}, that is a lax morphism of $\mathcal{P}$-algebras $\zeta \to A$, where $\zeta$ is the terminal categorical $\mathcal{P}$-algebra.
\end{definition}

Note that there is a morphism of operads $u: \mathcal{P} \to \mathcal{P}_*$ such that the restriction functor $u^*$ forgets the internal algebra.

\begin{definition}
	A \emph{black and white $\mathcal{P}$-tree} is a $\mathcal{P}$-tree where some vertices are black and some are white, with the extra conditions that a vertex decorated with the unit can not be black and there are no pairs of adjacent black vertices.
\end{definition}

\begin{remark}\label{remarktargetisfunctorial}
	The target (see Definition \ref{definitiontarget}) of a $\mathcal{P}$-tree extends to a functor. Indeed, if $\mathcal{P}$ is a categorical operad, the set of $\mathcal{P}$-trees has a categorical structure. There are morphisms between two $\mathcal{P}$-trees only when they have the same underlying combed tree and the same edge-colouring. Such morphisms are given by a morphism for each vertex. The target is then a functor from the category of $\mathcal{P}$-trees to the category of operations (see Definition \ref{definitioncategoryofoperations}) of $\mathcal{P}$.
\end{remark}

\begin{proposition}\label{propositionpplusstar}
	For a categorical operad $\mathcal{P}$, $\mathcal{P}_*^+ := (\mathcal{P}^+)_*$ is the operad whose:
	\begin{itemize}
		\item colours are operations of $\mathcal{P}$,
		\item for $(c_1,\ldots,c_k,c) \in C^{k+1}$,
		\[
		\mathcal{P}_*^+(c_1,\ldots,c_k;c)
		\]
		is the category whose
		\begin{itemize}
			\item objects are given by black and white $\mathcal{P}$-trees $T$ with linearly ordered set of white vertices $\{v_1,\ldots,v_k\}$, such that the operation of $v_i$ is $c_i$, together with a morphism $\epsilon: t(T) \to c$, where $t$ is the \emph{target functor} of Remark \ref{remarktargetisfunctorial},
			
			\item a morphism $(T,\epsilon) \to (T',\epsilon')$, is given by a morphism for each black vertex, inducing a morphism $\nu: T \to T'$ such that $\epsilon' \cdot t(\nu) = \epsilon$,
		\end{itemize}
		\item multiplication is as in Definition \ref{definitionpplus}.
	\end{itemize}
\end{proposition}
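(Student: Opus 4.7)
The plan is to unpack the definition of $(\mathcal{P}^+)_*$ and match it directly against the combinatorial description given in the statement.

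First I spell out what an internal algebra in a categorical $\mathcal{P}^+$-algebra $A = (A_c)_c$ amounts to. Since the colours of $\mathcal{P}^+$ are operations of $\mathcal{P}$, such an internal algebra is the data of an object $a_c \in A_c$ for each operation $c$ of $\mathcal{P}$, together with, for every $(T,\epsilon) \in \mathcal{P}^+(c_1,\ldots,c_k;c)$, a morphism $(T,\epsilon)(a_{c_1},\ldots,a_{c_k}) \to a_c$ in $A_c$, subject to the usual unitality, associativity, functoriality and equivariance axioms. Operations of $\mathcal{P}_*^+(c_1,\ldots,c_k;c)$ can then be read off from the free $\mathcal{P}^+$-algebra with internal structure on generators $x_1,\ldots,x_k$ of colours $c_1,\ldots,c_k$, as morphisms into $a_c$.

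Such morphisms are built by alternating two construction moves: grafting an operation of $\mathcal{P}^+$ (a combed $\mathcal{P}$-tree with an extra morphism) on top of what is already built, and applying an internal algebra morphism. By associativity of $\mathcal{P}^+$, iterated grafting of $\mathcal{P}$-trees collapses them into a single combed $\mathcal{P}$-tree whose vertices split naturally into two families: those supporting generators (drawn white and carrying the linear order $v_1,\ldots,v_k$) and those introduced by internal algebra morphisms (drawn black). The accumulated extra-morphism data collapses to a single $\epsilon : t(T) \to c$ obtained through the target functor of Remark \ref{remarktargetisfunctorial}. A morphism between two such presentations acts on the decorations at the black vertices and induces, via $t$, a morphism of the underlying $\mathcal{P}$-tree that must intertwine $\epsilon$ and $\epsilon'$, reproducing the hom-sets of the statement.

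Normalisation comes from the internal algebra axioms: the unit axiom forces the internal morphism attached to a unit operation of $\mathcal{P}$ to be an identity, so any black vertex decorated by a unit can be erased; the associativity axiom forces two adjacent internal morphisms to compose to a single one, so any pair of adjacent black vertices collapses. These are the only nontrivial identifications, and confluence of these reductions, which is the main technical point to verify, yields a unique normal form for every operation; these normal forms are exactly the black-and-white $\mathcal{P}$-trees of the statement. Finally, the operadic multiplication inherited from Definition \ref{definitionpplus} preserves normal form: $\circ_i$-substitution occurs at a white vertex $v_i$, so no new adjacent-black pattern is created at the insertion site, and the two extra morphisms compose into the new $\epsilon$. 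The remaining verifications (associativity, unitality, equivariance for $\mathcal{P}_*^+$) reduce to the corresponding axioms for $\mathcal{P}^+$ once one notes that black vertices are invisible to the linear ordering of white vertices.
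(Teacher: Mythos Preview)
Your argument and the paper's are essentially the same idea approached from dual directions. The paper starts from the candidate operad $\mathcal{Q}_*$ described in the statement and checks that a $\mathcal{Q}_*$-algebra is exactly a categorical $\mathcal{P}^+$-algebra together with an internal algebra: the suboperad of white-only trees is identified with $\mathcal{P}^+$, and the black corollas are shown to encode precisely the objects $a_c$ and the structure morphisms \eqref{equationmorphisminternalalgebra} of an internal algebra. You instead start from the definition of $(\mathcal{P}^+)_*$ and compute its operations as normal forms in the free $\mathcal{P}^+$-algebra-with-internal-algebra, arriving at the same black-and-white trees. In both cases the key observation is identical: white vertices carry the $\mathcal{P}^+$-structure, black vertices carry the internal algebra data, and the unit and associativity axioms for internal algebras are exactly what forbid black units and adjacent black vertices.

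Your route is somewhat more explicit about the categorical structure (morphisms at black vertices) and about why the normalisation conditions arise, whereas the paper's route makes the universal property more transparent and avoids having to argue confluence of the rewriting. The one place where your argument is genuinely thinner than the paper's is the phrase ``confluence of these reductions, which is the main technical point to verify'': you flag it but do not carry it out, so strictly speaking your proof is at the same level of sketchiness as the paper's, just with the gap located differently. Neither proof spells out in detail that the equivariance and associativity axioms for the internal algebra exhaust the relations, but both make this plausible.
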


\begin{proof}
	First recall \cite[Definition 9.1]{batanin} that for any $C$-coloured operad $\mathcal{P}$ and categorical $\mathcal{P}$-algebra $A$, an internal algebra in $A$ is given by
	\begin{itemize}
		\item for $c \in C$, an object $a_c \in A_c$,
		
		\item for $k \geq 0$, $(c_1,\ldots,c_k,c) \in C^{k+1}$ and $o \in \mathcal{P}(c_1,\ldots,c_k,c)$, a morphism
		\begin{equation}\label{equationmorphisminternalalgebra}
			m_o(a_{c_1},\ldots,a_{c_k}) \to a_c,
		\end{equation}
		where $m_o$ is the functor coming from the algebra structure,
	\end{itemize}
	satisfying associativity, unitality and equivariance axioms.

	Now let $\mathcal{Q}_*$ be the categorical operad given by the proposition. Let us prove that $\mathcal{Q}_*=\mathcal{P}_*^+$. Let $\mathcal{Q}$ be the suboperad of $\mathcal{Q}_*$ consisting of the operations that are trees containing only white vertices. There is an obvious isomorphism between $\mathcal{Q}$ and $\mathcal{P}^+$.
	
	Now let $A$ be a categorical $\mathcal{Q}$-algebra. Adding to $\mathcal{Q}$ the operations of $\mathcal{Q}_*$ consisting of corollas, whose unique vertex is black, generate $\mathcal{Q}$. So, what remains to prove is that these corollas correspond to an internal algebra in $A$. Let $c$ be a colour of $\mathcal{Q}$, that is an operation of $\mathcal{P}$. The object $a_c \in A_c$ is induced by the corolla with only one black vertex decorated with $c$ and the extra morphism is the identity. Now let $k \geq 0$, $(c_1,\ldots,c_k,c) \in C^{k+1}$ and $o \in \mathcal{Q}(c_1,\ldots,c_k;c)$. So $o$ is a $\mathcal{P}$-tree with $k$ vertices, together with an extra morphism. The morphisms as in \eqref{equationmorphisminternalalgebra} are induced by the extra morphism given by $o$.
\end{proof}

\subsection{Swiss cheese plus construction}

\begin{definition}
	For a set $C$, let $\mathrm{SOp}(C)$ be the operad for $C$-coloured operads.
\end{definition}

Note that $\mathrm{SOp}(C) = \mathrm{Com}(C)^+$, where $\mathrm{Com}(C)$ is the terminal $C$-coloured operad. For a $C$-coloured operad $\mathcal{P}$, there is a canonical map of operad \cite[Section 3.14]{bwdquasitame}
\begin{equation}\label{equationphi}
	\phi: \mathcal{P}^+ \to \mathrm{SOp}(C)
\end{equation}
induced by the unique map $\mathcal{P} \to \mathrm{Com}(C)$.

\begin{definition}\label{definitionalgebraofalgebra}\cite[Definition 3.3]{bwdquasitame}
	Let $\mathcal{P}$ be a categorical operad and $\mathcal{O}$ a categorical $\mathcal{P}^+$-algebra. An \emph{$\mathcal{O}$-algebra} is given by a $C$-indexed collection $A := (A_c)_{c \in C}$ of small categories, together with a morphism of $\mathcal{P}^+$-algebras
	\[
		\mathcal{O} \to \phi^* (\mathrm{End}_A),
	\]
	where $\phi^*$ is the restriction functor induced by \eqref{equationphi}.
\end{definition}

\begin{definition}
	Let $\mathcal{P}_{sc}^+$ be the operad whose algebras are pairs $(\mathcal{O},A)$, where $\mathcal{O}$ is a $\mathcal{P}_*^+$-algebra and $A$ is an $u^*(\mathcal{O})$-algebra.
\end{definition}

\begin{proposition}\label{propositionswisscheeseplus}
	$\mathcal{P}_{sc}^+$ is the Swiss cheese type operad whose:
	\begin{itemize}
		\item set of colours is $B \sqcup C$, where $B$ is the set of operations of $\mathcal{P}$ and $C$ is the set of colours of $\mathcal{P}$,
		\item for $(c_1,\ldots,c_k,c) \in B^{k+1}$,
		\[
			\mathcal{P}_{sc}^+(c_1,\ldots,c_k;c) = \mathcal{P}_*^+(c_1,\ldots,c_k;c).
		\]
		\item for $(c_1,\ldots,c_k) \in (B \sqcup C)^k$ and $c \in C$,
		\[
		\mathcal{P}_{sc}^+(c_1,\ldots,c_k;c)
		\]
		is the category whose objects are black and white $\mathcal{P}$-trees with linearly ordered set of vertices and leaves $\{x_1,\ldots,x_k\}$ such that
		\begin{itemize}
			\item if $c_i \in B$, $x_i$ is a vertex and if $c_i \in C$, $x_i$ is a leaf,
			\item in both cases, $x_i$ is decorated with $c_i$,
			\item the root edge is decorated with $c$,
		\end{itemize}
		and morphisms are given by a morphism for each black vertex,
		\item multiplication is given either by grafting the root of a $\mathcal{P}$-tree to the leaf of another $\mathcal{P}$-tree or by inserting as in Definition \ref{definitionpplus}.
	\end{itemize}
\end{proposition}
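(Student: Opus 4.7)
The plan is to follow the strategy of Proposition \ref{propositionpplusstar}: let $\mathcal{Q}$ denote the Swiss cheese type operad described in the statement, and identify a $\mathcal{Q}$-algebra with a pair $(\mathcal{O},A)$ in which $\mathcal{O}$ is a $\mathcal{P}_*^+$-algebra and $A$ is a $u^*(\mathcal{O})$-algebra. The argument splits into three parts: recover $\mathcal{O}$ from the suboperad on $B$-coloured operations, recover $A$ from the operations with $C$-coloured output, and check that the two multiplication rules match the defining structure.

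For the first part, the restriction of $\mathcal{Q}$ to colours in $B$ is, by the second bullet of the proposition, literally the operad $\mathcal{P}_*^+$. Hence by Proposition \ref{propositionpplusstar} the $B$-part of any $\mathcal{Q}$-algebra is a $\mathcal{P}_*^+$-algebra $\mathcal{O}$. The Swiss cheese type emptiness condition---operations with output in $B$ but at least one input in $C$ vanish---matches the fact that operations of $u^*(\mathcal{O})$ act on $A$ but can never produce values in $\mathcal{O}$.

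For the second part, the $C$-indexed collection $A=(A_c)_{c\in C}$ together with the functors out of the categories in the third bullet must encode a $u^*(\mathcal{O})$-algebra structure on $A$. I would verify this on generators. A corolla with a single white vertex decorated by $o \in \mathcal{P}(c_1,\ldots,c_k;c)$ gives the basic action $\mathcal{O}_o \otimes A_{c_1} \otimes \ldots \otimes A_{c_k} \to A_c$; these assemble, by allowing several white vertices and no black ones, into the morphism $\mathcal{O} \to \phi^*(\mathrm{End}_A)$ of $\mathcal{P}^+$-algebras required by Definition \ref{definitionalgebraofalgebra}, the compatibility with $\mathcal{P}^+$-operations being expressed exactly by tree insertion on the white-only subtrees. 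Black vertices play the same role as in Proposition \ref{propositionpplusstar}: a corolla with one black vertex decorated by $o$ corresponds to evaluating the internal algebra of $\mathcal{O}$ at $o$, and morphisms of internal algebras are recovered from the ``morphism at each black vertex'' clause.

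Finally, insertion into a white or black vertex reproduces the multiplication of $\mathcal{P}_*^+$ as in Proposition \ref{propositionpplusstar}, while grafting the root of a $C$-output tree onto a $C$-coloured leaf of another reproduces the composition in $\mathrm{End}_A$ via $\phi$, which is exactly the compatibility required for a $u^*(\mathcal{O})$-algebra. The main obstacle is the bookkeeping when the two compositions interact: grafting at leaves changes the tree shape and re-orders the labelled vertices, and one must verify that associativity, unitality and equivariance of $\mathcal{Q}$ translate exactly into the corresponding axioms for the pair $(\mathcal{O},A)$---in particular, that the internal algebra axioms for $\mathcal{O}$ remain respected when a black vertex sits above $C$-coloured leaves that will ultimately be evaluated on $A$.
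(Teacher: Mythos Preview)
Your proposal is correct and follows essentially the same approach as the paper's proof: define $\mathcal{Q}$ as the operad in the statement, note that the $B$-restriction is $\mathcal{P}_*^+$ (giving the $\mathcal{P}_*^+$-algebra $\mathcal{O}$), and then observe that the $u^*(\mathcal{O})$-action on $A$ is induced by the corollas with a unique white vertex. The paper's proof is quite terse and stops at this last observation, whereas you elaborate further on the role of black vertices, the grafting/insertion dichotomy, and the compatibility axioms---all of which are implicit in the paper but left to the reader.
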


\begin{proof}
	Let $\mathcal{Q}$ be the categorical operad given by the proposition. Let us prove that $\mathcal{Q} = \mathcal{P}_{sc}^+$. Since the set of colours of $\mathcal{Q}$ is $B \sqcup C$, a $\mathcal{Q}$-algebra is given by an $B$-indexed collection $\mathcal{O}$ and a $C$-indexed collection $A$. The restriction of $\mathcal{Q}$ to $B$ is $\mathcal{P}_*^+$, so the collection $\mathcal{O}$ has the structure of a $\mathcal{P}_*^+$-algebra. It remains to show that the $\mathcal{Q}$-action on $A$ amounts to an $u^*(\mathcal{O})$-action in the sense of Definition \ref{definitionalgebraofalgebra}. This action is induced by the operations of $\mathcal{Q}$ with target colour in $C$ consisting of corollas with a unique white vertex.
\end{proof}

\subsection{Hochschild object of an algebra over a coloured operad}

\begin{definition}
	For a $C$-coloured operad $\mathcal{P}$, let $\mathcal{P} / \mathrm{SOp}(C)$ be the operad whose algebras are $C$-coloured operads $\mathcal{Q}$ equipped with a map of operads $\mathcal{P} \to \mathcal{Q}$. Let $\mathrm{Gr}(\mathcal{P} / \mathrm{SOp}(C))$ be the operad whose algebras are $C$-coloured operads $\mathcal{Q}$ equipped with a map of operads $\mathcal{P} \to \mathcal{Q}$ and a $\mathcal{Q}$-algebra $A$.
\end{definition}

\begin{lemma}\label{lemmamapofoperads}
	There is a morphism of operads
	\begin{equation}\label{equationfirstmorphism}
		\mathcal{P}_{sc}^+ \to \mathrm{Gr}(\mathcal{P} / \mathrm{SOp}(C)),
	\end{equation}
	which restricts to a morphism
	\begin{equation}\label{equationsecondmorphism}
		\mathcal{P}_*^+ \to \mathcal{P} / \mathrm{SOp}(C).
	\end{equation}
\end{lemma}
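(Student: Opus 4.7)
My plan is to use the universal property defining $\mathrm{Gr}(\mathcal{P}/\mathrm{SOp}(C))$: giving an operad morphism $\mathcal{P}_{sc}^+ \to \mathrm{Gr}(\mathcal{P}/\mathrm{SOp}(C))$ is the same as a natural assignment of a $\mathcal{P}_{sc}^+$-algebra structure to every $\mathrm{Gr}(\mathcal{P}/\mathrm{SOp}(C))$-algebra. So I start with an arbitrary such algebra, namely a triple $(\mathcal{Q}, \iota\colon \mathcal{P} \to \mathcal{Q}, A)$ where $\mathcal{Q}$ is a $C$-coloured operad and $A$ is a $\mathcal{Q}$-algebra. By Proposition~\ref{propositionswisscheeseplus}, the desired $\mathcal{P}_{sc}^+$-algebra structure on this triple decomposes as a $\mathcal{P}_*^+$-algebra $\mathcal{O}$ on the $B$-indexed part together with a $u^*(\mathcal{O})$-algebra structure on $A$ in the sense of Definition~\ref{definitionalgebraofalgebra}.

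The natural choice is $\mathcal{O} := \phi^*(\mathcal{Q})$, the $\mathcal{P}^+$-algebra obtained by pulling back the $\mathrm{SOp}(C)$-algebra $\mathcal{Q}$ along \eqref{equationphi}; explicitly, $\mathcal{O}_p = \mathcal{Q}(c_1, \ldots, c_k; c)$ for each $p \in \mathcal{P}(c_1, \ldots, c_k; c)$, and the action of a $\mathcal{P}^+$-operation $(T, \epsilon)$ on $\mathcal{O}$ is given by operadic composition in $\mathcal{Q}$ following the shape of $T$. I then equip $\mathcal{O}$ with the internal algebra given by the tautological family $a_p := \iota(p) \in \mathcal{O}_p$. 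The $u^*(\mathcal{O})$-algebra structure on $A$ is the morphism of $\mathcal{P}^+$-algebras $\mathcal{O} = \phi^*(\mathcal{Q}) \to \phi^*(\mathrm{End}_A)$ obtained by applying $\phi^*$ to the $\mathcal{Q}$-algebra structure $\mathcal{Q} \to \mathrm{End}_A$ on $A$.

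The verification that this is a well-defined $\mathcal{P}_{sc}^+$-algebra structure splits into three parts: (i) $(\iota(p))_{p \in B}$ really is an internal algebra, which reduces to the fact that, for $o = (T, \epsilon) \in \mathcal{P}^+(c_1, \ldots, c_k; c)$, the tree composition $m_o(\iota(c_1), \ldots, \iota(c_k))$ in $\mathcal{Q}$ equals $\iota(t(T))$ because $\iota$ is an operad morphism, so $\iota(\epsilon)$ yields the required morphism to $a_c = \iota(c)$; (ii) the usual operad axioms for the action, all of which reduce to the corresponding axioms for $\mathcal{Q}$ and for $\iota$; (iii) naturality in $(\mathcal{Q}, \iota, A)$, immediate from the pullback description. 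The restriction \eqref{equationsecondmorphism} is then obtained by construction from forgetting $A$: by Proposition~\ref{propositionswisscheeseplus}, the operations of $\mathcal{P}_{sc}^+$ with output colour in $B$ are exactly those of $\mathcal{P}_*^+$, and they only involve the $\mathcal{O}$ piece.

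I expect the main bookkeeping to concern the morphisms between $\mathcal{P}$-trees in $\mathcal{P}_*^+(c_1, \ldots, c_k; c)$, described in Proposition~\ref{propositionpplusstar} as families of morphisms at the black vertices satisfying $\epsilon' \cdot t(\nu) = \epsilon$. I have to check that, applied to the internal algebra $(\iota(p))_{p \in B}$, these morphisms induce the expected compatibilities in $\mathcal{Q}$; this is a straightforward diagram chase using functoriality of $\iota$ and of the target functor (Remark~\ref{remarktargetisfunctorial}).
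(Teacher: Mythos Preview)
Your approach is correct but genuinely different from the paper's. The paper proceeds by first giving an explicit combinatorial description of the target operad $\mathrm{Gr}(\mathcal{P}/\mathrm{SOp}(C))$: its set of colours is $Bq(C)\sqcup C$, where $Bq(C)$ is the set of $C$-bouquets, and its operations are exactly like those of $\mathcal{P}_{sc}^+$ except that the white vertices carry no decoration by an operation of $\mathcal{P}$. The morphism \eqref{equationfirstmorphism} is then simply ``forget the decorations of the white vertices'' (on colours, it is the projection $B\to Bq(C)$ sending an operation to its underlying arity profile, and the identity on $C$). The restriction \eqref{equationsecondmorphism} is read off directly.

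You instead work on the level of algebras: you take a $\mathrm{Gr}(\mathcal{P}/\mathrm{SOp}(C))$-algebra $(\mathcal{Q},\iota,A)$ and produce a $\mathcal{P}_{sc}^+$-algebra $(\phi^*\mathcal{Q},A)$, with internal algebra $p\mapsto\iota(p)$, naturally in the input. This is a clean conceptual construction and avoids unpacking the tree combinatorics of the target. The price is that your first sentence hides a nontrivial (though standard) step: a natural assignment of $\mathcal{P}_{sc}^+$-algebra structures is the same as an operad morphism only once you observe that the underlying collection of your output is obtained from the input by reindexing along a fixed map of colour sets $B\sqcup C\to Bq(C)\sqcup C$ (implicit in your formula $\mathcal{O}_p=\mathcal{Q}(c_1,\ldots,c_k;c)$), and then invoke the usual correspondence between operad morphisms and functors on algebras commuting with the forgetful functors. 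It would strengthen the write-up to state this explicitly. By contrast, the paper's explicit description makes the map immediately computable, which is convenient when the morphism is used concretely later (e.g.\ in Definition~\ref{definitionhochschildobject}).
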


\begin{proof}
	Let us describe the Swiss cheese type operad $\mathrm{Gr}(\mathcal{P} / \mathrm{SOp}(C))$ explicitly. We can follow what is done for a slightly different operad in \cite[Section 3.3]{bwdquasitame}. First recall that a \emph{$C$-bouquet} is an $C$-coloured planar corolla. So, it is given by $k \geq 0$ together with $(c_1,\ldots,c_k,c) \in C^{k+1}$:
	\[
		\begin{tikzpicture}
			\draw[fill] (0,-.3) -- (0,0) circle (1.2pt);
			\draw (0,-.3) node[below]{$c$};
			\draw (-.9,.7) -- (0,0) -- (.9,.7);
			\draw (-.9,.7) node[above]{$c_1$};
			\draw (.9,.7) node[above]{$c_k$};
			\draw (0,.4) node{$\ldots$};
		\end{tikzpicture}
	\]
	The set of colours of $\mathrm{Gr}(\mathcal{P} / \mathrm{SOp}(I))$ is $Bq(C) \sqcup C$, where $Bq(C)$ is the set of $C$-bouquets. The operations are the same as for $\mathcal{P}_{sc}^+$, except that the white vertices are not labelled with an operation of $\mathcal{P}$.
	
	On colours, the morphism \ref{equationfirstmorphism} is given by the function $B \sqcup C \to Bq(C) \sqcup C$ which is the projection $B \to Bq(C)$ and the identity on $C$. On operations, it forgets the decorations of the white vertices. The morphism \ref{equationsecondmorphism} is given by restricting the morphism \ref{equationfirstmorphism} from $B \sqcup C$ to $B$.
\end{proof}

\begin{definition}\label{definitioncategorydeltap}
	For a $C$-coloured categorical operad $\mathcal{P}$, let
	\[
		\Delta \mathcal{P} := u\mathcal{P}_*^+
	\]
	be the underlying category of $\mathcal{P}_*^+$. Recall from Proposition \ref{propositionpplusstar} that the objects of $\Delta \mathcal{P}$ are operations of $\mathcal{P}$. For $c \in C$, let $\Delta \mathcal{P}_c$ be the full subcategory of operation of $\mathcal{P}$ with target $c$.
\end{definition}

\begin{definition}\label{definitionhochschildobject}
	Let $\mathcal{P}$ be a $C$-coloured categorical operad and $A$ a $\mathcal{P}$-algebra. Then $\mathrm{End}_A$ is a $\mathcal{P} / \mathrm{SOp}(C)$-algebra which can be restricted to $\mathcal{P}_*^+$ thanks to the map \eqref{equationsecondmorphism}. Let $\delta: \Delta\mathcal{P} \to \mathrm{Cat}$. For $c \in C$, we define
	\[
	\mathrm{Hoch}(A)_c := \underline{\mathrm{Hom}}_{\Delta \mathcal{P}_c} (\delta,\mathrm{End}_A).
	\]
\end{definition}

\begin{example}
	If $\mathcal{P}$ is the operad $\mathcal{A}ss$ for associative algebras, $\mathcal{P}_*^+ = \mathcal{L}_2$ is the operad for multiplicative non-symmetric operads \cite{BataninBergerLattice}. The underlying category $\Delta \mathcal{P} = u \mathcal{L}_2$ is the simplex category $\Delta$. For an associative algebra $A$ and $\delta$ the canonical inclusion, we recover the classical notion of Hochschild cochain of $A$.
\end{example}

\subsection{Swiss cheese action}

\begin{definition}
	Let $\mathcal{P}$ be a $C$-coloured operad. Let $f: B \sqcup C \to C \sqcup C$ be the function given by the target map $B \to C$ and the identity on $C$. The $C \sqcup C$-coloured operad $\mathrm{SC}(\mathcal{P})$ is defined as the condensation of $\mathcal{P}_{sc}^+$ along $f$.
\end{definition}

\begin{lemma}\label{lemmaalgebra}
	Let $\mathcal{P}$ be a categorical coloured operad and $A$ be a $\mathcal{P}$-algebra. Then the pair $(\mathrm{Hoch}(A),A)$ has the structure of an $\mathrm{SC}(\mathcal{P})$-algebra.
\end{lemma}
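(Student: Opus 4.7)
The plan is to apply Proposition~\ref{propositionalgebrascondensation} to a $\mathcal{P}_{sc}^+$-algebra structure on the pair $(\mathrm{End}_A, A)$ and to identify the resulting condensation with $(\mathrm{Hoch}(A), A)$. To construct the $\mathcal{P}_{sc}^+$-algebra structure, note that the $\mathcal{P}$-algebra structure on $A$ is by definition a morphism of operads $\mathcal{P} \to \mathrm{End}_A$, which makes $\mathrm{End}_A$ into a $\mathcal{P}/\mathrm{SOp}(C)$-algebra; combined with the tautological $\mathrm{End}_A$-action on $A$, this turns $(\mathrm{End}_A, A)$ into a $\mathrm{Gr}(\mathcal{P}/\mathrm{SOp}(C))$-algebra. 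Restricting along the operad morphism \eqref{equationfirstmorphism} of Lemma~\ref{lemmamapofoperads} then equips $(\mathrm{End}_A, A)$ with the required $\mathcal{P}_{sc}^+$-algebra structure.

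Next, I would extend the functor $\delta : \Delta\mathcal{P} \to \mathrm{Cat}$ of Definition~\ref{definitionhochschildobject} to a functor $\tilde\delta : u\mathcal{P}_{sc}^+ \to \mathrm{Cat}$ by keeping $\delta$ on $B$-coloured objects and declaring $\tilde\delta(c) := *$ (the terminal category) for each $c \in C$; functoriality on the mixed morphisms is automatic since every arrow with codomain $*$ is uniquely determined. Applying Proposition~\ref{propositionalgebrascondensation} with this data along the function $f : B \sqcup C \to C \sqcup C$ that defines $\mathrm{SC}(\mathcal{P})$ yields a $(C \sqcup C)$-indexed $\mathrm{SC}(\mathcal{P})$-algebra. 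At $(c, 1) \in C \sqcup C$ the fibre is $\Delta\mathcal{P}_c$, so the component there equals $\underline{\mathrm{Hom}}_{\Delta\mathcal{P}_c}(\delta, \mathrm{End}_A) = \mathrm{Hoch}(A)_c$ by Definition~\ref{definitionhochschildobject}; at $(c, 2)$ the fibre is the full subcategory of $u\mathcal{P}_{sc}^+$ on the single object $c \in C$, and the corresponding component collapses to $A_c$, assembling the pair into $(\mathrm{Hoch}(A), A)$.

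The main obstacle I foresee is the identification at the second copy: the fibre $(\mathcal{P}_{sc}^+)_{(c,2)}$ may carry non-identity endomorphisms coming from unary non-unit operations of $\mathcal{P}$ placed as isolated black vertices between the unique leaf and the root of the underlying tree (the description of Proposition~\ref{propositionswisscheeseplus} permits such configurations because a single black vertex does not violate the ``no two adjacent black vertices'' rule). One must therefore verify that the end $\underline{\mathrm{Hom}}_{(\mathcal{P}_{sc}^+)_{(c,2)}}(\tilde\delta, A)$ genuinely recovers $A_c$. This reduces to a compatibility check between those black-vertex actions and the chosen $\tilde\delta$, and may require refining $\tilde\delta$ on $C$-coloured objects (for instance by a Yoneda-style representable in place of the constant terminal choice) so that the end formula returns the desired component.
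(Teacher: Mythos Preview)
Your approach is essentially identical to the paper's: build a $\mathcal{P}_{sc}^+$-algebra structure on $(\mathrm{End}_A,A)$ by noting that the structure map $\mathcal{P}\to\mathrm{End}_A$ together with the tautological $\mathrm{End}_A$-action on $A$ give a $\mathrm{Gr}(\mathcal{P}/\mathrm{SOp}(C))$-algebra, restrict along the morphism of Lemma~\ref{lemmamapofoperads}, and then apply Proposition~\ref{propositionalgebrascondensation}. The paper's proof is four sentences long and stops exactly there, without spelling out the identification of the two components of the resulting $\mathrm{SC}(\mathcal{P})$-algebra.

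The extra work you do (extending $\delta$ to $\tilde\delta$ and checking the two components) is not in the paper's proof at all; the paper leaves $\delta$ as an implicit parameter both in Definition~\ref{definitionhochschildobject} and in the definition of $\mathrm{SC}(\mathcal{P})$, and simply invokes Proposition~\ref{propositionalgebrascondensation} for the conclusion. Your observation about the fibre over $(c,2)$ is accurate: by Proposition~\ref{propositionswisscheeseplus} a unary operation in $\mathcal{P}_{sc}^+(c;c)$ with $c\in C$ is a tree with a single leaf and at most one (necessarily unary, non-unit) black vertex, so that endomorphism category is genuinely the category $\mathcal{P}(c;c)$ of unary operations, not the trivial monoid. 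With your choice $\tilde\delta|_C\equiv *$ the end over that fibre computes the equaliser of all $\mathcal{P}(c;c)$-actions on $A_c$, which is not $A_c$ in general. Your proposed fix (a representable rather than terminal choice on the $C$-side) is the right direction and is consistent with the comma-category $\delta$ used later in Lemma~\ref{lemmaformulacoend}; the paper, however, does not engage with this point.
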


\begin{proof}
	By definition of $A$ being a $\mathcal{P}$-algebra, there is a map of operads $\mathcal{P} \to \mathrm{End}_A$. Also note that $A$ is trivially an $\mathrm{End}_A$-algebra. Therefore, using Lemma \ref{lemmamapofoperads}, the pair $(A,\mathrm{End}_A)$ has the structure of a $\mathcal{P}_{sc}^+$-algebra. The conclusion follows from Proposition \ref{propositionalgebrascondensation}.
\end{proof}

The category of small categories is equipped with \emph{Thomason model's structure}. In this model structure, a functor between categories is a weak equivalence when it induces a weak equivalence between nerves.

\begin{conjecture}\label{conjecture}
	Let $\mathcal{P}$ be a coloured categorical operad and $A$ be a $\mathcal{P}$-algebra. Then $\mathrm{Hoch}(A)$ is the terminal object in the homotopy category of Swiss cheese actions (see Definition \ref{definitionswisscheeseaction}) on $A$.
\end{conjecture}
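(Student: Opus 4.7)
The plan is to establish the conjecture by descending a strict terminality statement at the level of $\mathcal{P}_{sc}^+$-algebras through the condensation functor. The starting point is that within the category of $\mathcal{P}_{sc}^+$-algebras whose lower component is $A$, the pair $(\mathrm{End}_A, A)$ is strictly terminal: by Definition \ref{definitionalgebraofalgebra}, a $\mathcal{P}_{sc}^+$-algebra structure on some $(\mathcal{O}, A)$ is exactly the data of a map of $\mathcal{P}_*^+$-algebras $\mathcal{O} \to \phi^*(\mathrm{End}_A)$ through the morphism of Lemma \ref{lemmamapofoperads}, and this map is the unique comparison into $\mathrm{End}_A$.

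Next, I would apply the condensation along $f : B \sqcup C \to C \sqcup C$ to this terminal map, producing for each $\mathcal{P}_{sc}^+$-algebra $(\mathcal{O}, A)$ a canonical morphism of $\mathrm{SC}(\mathcal{P})$-algebras $(\underline{\mathrm{Hom}}_{\Delta\mathcal{P}_c}(\delta, \mathcal{O}), A) \to (\mathrm{Hoch}(A), A)$ compatible with the actions on $A$, by functoriality of $\underline{\mathrm{Hom}}$ and Proposition \ref{propositionalgebrascondensation}. In particular, every Swiss cheese action arising via condensation from a $\mathcal{P}_{sc}^+$-algebra admits a canonical comparison map to $\mathrm{Hoch}(A)$, so the content of the conjecture is concentrated in reducing an arbitrary Swiss cheese action to one of this form.

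The main obstacle I expect is therefore a rectification statement for Swiss cheese actions: every $\mathrm{SC}(\mathcal{P})$-algebra structure on a pair $(B, A)$ extending the $\mathcal{P}$-action on $A$ should be, up to Thomason weak equivalence in the $B$-variable, of the form $(\underline{\mathrm{Hom}}_{\Delta\mathcal{P}_c}(\delta, \tilde{\mathcal{O}}), A)$ for some $\mathcal{P}_{sc}^+$-algebra $\tilde{\mathcal{O}}$. Equivalently, one needs the condensation functor to be a right Quillen equivalence between suitable slice categories of $\mathcal{P}_{sc}^+$- and $\mathrm{SC}(\mathcal{P})$-algebras over $A$, in the spirit of Dwyer-Kan rectification and the nerve arguments of \cite{BataninBergerLattice}. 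Once this is in place, the proof concludes formally: rectify an arbitrary $B$ to $\underline{\mathrm{Hom}}_{\Delta\mathcal{P}_c}(\delta, \tilde{\mathcal{O}})$, apply the strict terminal property to obtain $\tilde{\mathcal{O}} \to \mathrm{End}_A$ in the category of $\mathcal{P}_{sc}^+$-algebras over $A$, and condense. The adaptation of cofibrant replacement techniques to the plus-construction setting — in which one must keep track of the extra internal algebra structure of \cite{batanin} — is precisely what prevents the statement from being unconditional, and is why I expect this rectification step to be substantially more delicate than the corresponding argument in the non-plus situation.
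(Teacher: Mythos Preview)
The paper does not prove this statement: it is stated as Conjecture~\ref{conjecture} and left open. There is therefore no proof in the paper to compare your proposal against.

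Your proposal is not a proof either, and you are honest about this: the entire weight of the argument rests on the rectification step, which you identify as the ``main obstacle'' and do not carry out. What you have written is a plausible strategy, and the first two paragraphs are essentially correct bookkeeping --- the terminality of $(\mathrm{End}_A,A)$ among $\mathcal{P}_{sc}^+$-algebras with fixed lower component $A$ is immediate from the definitions, and condensation is functorial. But the claim that condensation induces a Quillen equivalence between the relevant slice categories is precisely the conjecture itself in disguise: you have not reduced the problem, only reformulated it. In particular, there is no reason a priori that every $\mathrm{SC}(\mathcal{P})$-action on $(B,A)$ arises, even up to weak equivalence, from a $\mathcal{P}_{sc}^+$-algebra via condensation; establishing this would require a genuine homotopical analysis that neither you nor the paper provides.

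So your proposal and the paper are in agreement in the only sense that matters here: both recognise the statement as open.
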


\section{Weak version of Kontsevich's Swiss cheese conjecture}\label{sectionkontsevichconjecture}

\subsection{The complexity map}

Recall \cite[Definition 4.1]{weberoperads} that one can define a notion of a \emph{lax morphism between categorical operads}. Briefly, for a lax morphism of operads $f: \mathcal{P} \to \mathcal{Q}$, instead of having $f(x \circ_i y) = f(x) \circ_i f(y)$ as is the case for classical morphisms of operads, we assume that $f$ comes equipped with natural transformations $f(x \circ_i y) \to f(x) \circ_i f(y)$ satisfying obvious axioms.

\begin{lemma}\label{lemmaexistencecomplexitymap}
	There is a lax morphism of operads
	\begin{equation}\label{equationmapc}
		f: (\mathcal{K}_n)_{sc}^+ \to \mathcal{RK}_{n+1}.
	\end{equation}
\end{lemma}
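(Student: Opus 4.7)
I would adapt the complexity map of \cite[Proposition 3.4]{BataninBergerLattice}, which corresponds essentially to the case $n = 1$ without Swiss cheese colours. On colours, the operad $(\mathcal{K}_n)_{sc}^+$ has colour set $B \sqcup \{*\}$ with $B$ the set of operations of $\mathcal{K}_n$ and $*$ the unique colour of $\mathcal{K}_n$; I set $f(B) := \mathsf{f}$ and $f(*) := \mathsf{h}$, which is consistent with the Swiss cheese type structure on both sides.

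On operations, given a black and white $\mathcal{K}_n$-tree $T \in (\mathcal{K}_n)_{sc}^+(c_1, \ldots, c_k; c)$ with linearly ordered labelled nodes $x_1, \ldots, x_k$ (each either a white vertex or a leaf of $T$), I assign $f(T) = (\mu, \sigma) \in \mathcal{K}_{n+1}(k)$ as follows. For $1 \leq i < j \leq k$, let $P_{ij}$ denote the path in $T$ from $x_i$ to $x_j$ through their least common ancestor $v_{ij}$. If $P_{ij}$ contains a black vertex, set $\mu_{ij} := n + 1$; otherwise $v_{ij}$ is a white vertex, decorated by an operation $(\mu^v, \sigma^v)$ of $\mathcal{K}_n$, and the two branches of $P_{ij}$ meeting $v_{ij}$ pick out two distinct input indices $p, q$ of $v_{ij}$, so I set $\mu_{ij} := \mu^v_{pq}$. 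The permutation $\sigma$ is read off from the planar and combed order of the labelled nodes composed with the permutations $\sigma^v$ at the intervening white vertices. When the output colour of $T$ is $\mathsf{h}$, I would verify the extra Swiss cheese constraints $\mu_{ij} \leq n$: in each configuration listed in the definition of $\mathcal{RK}_{n+1}$, the path $P_{ij}$ is forced to avoid black vertices, so $\mu_{ij}$ already lies in $\{1, \ldots, n\}$.

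Morphisms in $(\mathcal{K}_n)_{sc}^+$ come from morphisms at the black vertices of $T$; since $\mathcal{K}_n$ is a poset, these can only weakly increase the decorations used in the definition of $\mu$, so $f$ sends morphisms to order comparisons in $\mathcal{K}_{n+1}$, hence to morphisms in $\mathcal{RK}_{n+1}$. For the lax structure, I would compare $f(x \circ_i y)$ and $f(x) \circ_i f(y)$ pair by pair: same-tree pairs give identical values, while for a cross-tree pair between a labelled node inside the inserted subtree and a labelled node outside, the extra path segment across the graft point may traverse a black vertex, forcing the value $n+1$ on one side, whereas the insertion rule in $\mathcal{K}_{n+1}$ inherits a possibly smaller value from the outer operation on the other. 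In every case there is a componentwise comparison in the poset $\mathcal{K}_{n+1}$, yielding the natural transformation $f(x \circ_i y) \to f(x) \circ_i f(y)$ required by the lax-morphism structure.

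The principal obstacle will be verifying the coherence axioms of the lax morphism — associativity, unitality and equivariance of the natural transformations under iterated compositions. This requires careful tracking of how the combings of the plus construction interact with the permutations $\sigma^v$ carried by the $\mathcal{K}_n$-operations at the white vertices, so that the permutation component of $f(T)$ and the induced comparison cells fit together consistently under repeated grafting and insertion.
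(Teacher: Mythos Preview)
Your overall strategy matches the paper's, but the rule you use to decide when $\mu_{ij}=n+1$ is not the right one, and this is a genuine gap rather than a cosmetic difference. The paper's criterion is purely combinatorial in the tree: $\mu_{ij}=n+1$ exactly when $x_i$ and $x_j$ are comparable (one lies above the other), and otherwise $(\mu_{ij},\sigma_{ij})$ is read off from the decoration of their nearest common ancestor, \emph{whether that vertex is black or white}. Your criterion instead tests whether the path $P_{ij}$ meets a black vertex. These disagree in two situations, and both cause trouble.

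First, take two leaves $x_i,x_j$ (so $c_i=c_j=\mathsf{h}$) whose nearest common ancestor is a black vertex. Your rule gives $\mu_{ij}=n+1$, but for output colour $\mathsf{h}$ the target poset forces $\mu_{ij}\leq n$ in this case; so your $f$ does not land in the relative complete graph operad at all. Your claimed verification that ``the path $P_{ij}$ is forced to avoid black vertices'' is simply false: nothing in the description of $(\mathcal{K}_n)^+_{sc}$ prevents a black vertex from sitting below two leaves. The paper's rule avoids this because the decoration of that black vertex already lives in $\mathcal{K}_n$, hence gives a value $\leq n$. Second, if $x_i$ is a leaf or white vertex lying directly above a white vertex $x_j$ with no black vertex between them, the path $P_{ij}$ contains no black vertex, yet your ``two distinct input indices $p,q$ of $v_{ij}$'' do not exist, since $v_{ij}=x_j$ and the path reaches it along a single input edge; your definition is undefined here, whereas the paper assigns $\mu_{ij}=n+1$ with $\sigma_{ij}$ recording which of $i,j$ is on top. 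Once you replace your black-vertex test by the ancestor--descendant test and allow the nearest common ancestor to be black, the rest of your outline (functoriality via the poset structure, the lax comparison for cross-tree pairs, and the coherence checks) goes through as you describe and coincides with the paper's construction.
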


\begin{proof}
	Recall from Proposition \ref{propositionswisscheeseplus} that the set of colours of $(\mathcal{K}_n)_{sc}^+$ is $B \sqcup 1$, where $\mathcal{B}$ is the set of operations of $\mathcal{K}_n$ and $1 = \{*\}$ is the singleton set. The set of operations of $\mathcal{RK}_{n+1}$ is $\{\mathsf{f},\mathsf{h}\}$. On colours, $f$ sends an element of $B$ to $\mathsf{f}$ and $*$ to $\mathsf{h}$.
	
	Now let $(c_1,\ldots,c_k,c) \in (B \sqcup 1)^{k+1}$ and let us define a functor
	\[
	f: (\mathcal{K}_n)_{sc}^+(c_1,\ldots,c_k;c) \to \mathcal{RK}_{n+1}(f(c_1),\ldots,f(c_k);f(c))
	\]
	Let $x \in (\mathcal{K}_n)_{sc}^+(c_1,\ldots,c_k;c)$. Let $f(x) := (\mu,\sigma)$, where $\mu$ and $\sigma$ are defined as follows. Recall that $x$ is a black and white $\mathcal{K}_n$-tree and each element $\{1,\ldots,k\}$ corresponds to a vertex or a leaf of this tree. For $1 \leq i < j \leq k$, if $i$ and $j$ are one above the other, then $\mu_{ij}:=n+1$ and
	\[
	\sigma_{ij} = 
	\begin{cases}
	id &\text{if $i$ is above $j$,} \\
	(12) &\text{if $i$ is below $j$.}
	\end{cases}
	\]
	If $i$ and $j$ are not one above the other, then there is a maximal vertex $v$ in the $\mathcal{K}_n$-tree $x$ which lies below $i$ and $j$. Note that $v$ is decorated with an operation $(\mu',\sigma') \in \mathcal{K}_n(|v|)$, where $|v|$ is the number of input edges of $v$. Let $l$ and $m$ in $\{1,\ldots,|v|\}$ corresponding to the edges that lie below $i$ and $j$ respectively. We define $(\mu_{ij},\sigma_{ij}) := (\mu'_{lm},\sigma'_{lm})$. As an illustration, the functor
	\begin{equation}\label{equationexamplecomplexitymap}
		f: (\mathcal{K}_2)_{sc}^+(*,*,c_3,c_4,*;*) \to \mathcal{RK}_3(\mathsf{h},\mathsf{h},\mathsf{f},\mathsf{f},\mathsf{h};\mathsf{h})
	\end{equation}
	can be pictured as follows:
	\[
	\begin{tikzpicture}
	\draw (0,-.9) -- (0,0) -- (-1.5,2.25) node[above]{\large{$5$}};
	\draw (0,0) -- (.6,.9) node[above]{\large{$2$}};
	\draw (-.9,1.35) -- (-.3,2.25) node[above]{\large{$1$}};
	\draw (-.6,0) node[left]{\large{$3$}};
	\draw (-1.5,1.35) node[left]{\large{$4$}};
	
	\draw[fill=white] (0,0) circle (.6);
	\draw[dotted] (-.4,0) -- (.4,0);
	\draw (0,0) node{\small{$2$}};
	\draw[fill] (-.4,0) circle (1pt);
	\draw[fill] (.4,0) circle (1pt);
	\draw[fill=white] (-.9,1.35) circle (.6);
	\draw[dotted] (-1.3,1.35) -- (-.5,1.35);
	\draw (-.9,1.35) node{\small{$1$}};
	\draw[fill] (-1.3,1.35) circle (1pt);
	\draw[fill] (-.5,1.35) circle (1pt);
	
	\draw (2.65,.7) node{$\mapsto$};
	
	\begin{scope}[shift={(6,.7)},scale=1.5]
	\draw[dotted] ({cos(54)},{-sin(54)}) -- ({cos(18)},{sin(18)}) -- (0,1) -- ({-cos(18)},{sin(18)}) -- ({-cos(54)},{-sin(54)}) -- ({cos(54)},{-sin(54)}) -- (0,1) -- ({-cos(54)},{-sin(54)}) -- ({cos(18)},{sin(18)}) -- ({-cos(18)},{sin(18)}) -- ({cos(54)},{-sin(54)});
	
	\draw[fill=white] ({cos(18)},{sin(18)}) circle (.16) node{\large{$3$}};
	\draw[fill=white] (0,1) circle (.16) node{\large{$2$}};
	\draw[fill=white] ({-cos(18)},{sin(18)}) circle (.16) node{\large{$1$}};
	\draw[fill=white] ({-cos(54)},{-sin(54)}) circle (.16) node{\large{$4$}};
	\draw[fill=white] ({cos(54)},{-sin(54)}) circle (.16) node{\large{$5$}};
	
	\draw (0,{-sin(54)}) node{\small{$3$}};
	\draw ({cos(18)/2+cos(54)/2},{sin(18)/2-sin(54)/2}) node{\small{$3$}};
	\draw ({cos(18)/2},{sin(18)/2+1/2}) node{\small{$3$}};
	\draw ({-cos(18)/2},{sin(18)/2+1/2}) node{\small{$2$}};
	\draw ({-cos(18)/2-cos(54)/2},{sin(18)/2-sin(54)/2}) node{\small{$3$}};
	\draw ({cos(18)/2-cos(54)/2},{sin(18)/2-sin(54)/2}) node{\small{$3$}};
	\draw (0,{sin(18)}) node{\small{$3$}};
	\draw ({cos(54)/2-cos(18)/2},{sin(18)/2-sin(54)/2}) node{\small{$1$}};
	\draw ({cos(54)/2},{1/2-sin(54)/2}) node{\small{$2$}};
	\draw ({-cos(54)/2},{1/2-sin(54)/2}) node {\small{$2$}};
	\end{scope}
	\end{tikzpicture}
	\]
	$c_3$ and $c_4$ in \eqref{equationexamplecomplexitymap} are the operations which decorate the vertices numbered by $3$ and $4$ respectively.
	
	Note that if $i$ and $j$ satisfy one of the following:
	\begin{itemize}
		\item $f(c_i)=f(c_j)=\mathsf{h}$,
		\item $f(c_i)=\mathsf{f}$, $f(c_j)=\mathsf{h}$ and $\sigma_{ij}=id$,
		\item $f(c_i)=\mathsf{h}$, $f(c_j)=\mathsf{f}$ and $\sigma_{ij}=(12)$,
	\end{itemize}
	then, since nothing can be above a leaf, $i$ and $j$ can not be one above the other, so $\mu_{ij} \leq n$. This proves that $f$ is well-defined. The proof that $f$ is indeed a lax morphism of operads is left to the reader.
\end{proof}

\subsection{A formula for the coend}

\begin{definition}\cite[Definition 3.1]{delegergregocondensation}
	Let $\mathcal{P}$ be a coloured operad in $\mathrm{Cat}$ with set of colours $C$ and $f: C \to D$ be a function of sets. For $c \in C$ and $\mathbf{d} := (d_1,\ldots,d_k) \in D^k$, we define $\mathcal{P}_{\mathbf{d}}^f/c$ as the category whose
	\begin{itemize}
		\item objects are elements $\mathbf{c}=(c_1,\ldots,c_k) \in C^k$ such that $f^k(\mathbf{c})=\mathbf{d}$, together with an operation $o \in \mathcal{P}(c_1,\ldots,c_k;c)$,
		\item morphisms from $o \in \mathcal{P}(c_1,\ldots,c_k;c)$ to $o' \in \mathcal{P}(c'_1,\ldots,c'_k;c)$ are given by $k$-tuples of unary operations
		\begin{equation}\label{equationktuple}
		(p_1,\ldots,p_k) \in \mathcal{P}(c_1;c'_1) \times \ldots \times \mathcal{P}(c_k;c'_k)
		\end{equation}
		together with an extra morphism
		\begin{equation}\label{equationextramorphism}
			o \to m(o',p_1,\ldots,p_k).
		\end{equation}
	\end{itemize}
\end{definition}

\begin{remark}
	The morphisms of $\mathcal{P}_{\mathbf{d}}^f/c$ are generated by two types of morphisms. The first type is when the extra morphism \eqref{equationextramorphism} is the identity. The second type is when the $k$-tuple \eqref{equationktuple} is given by the units of $\mathcal{P}$.
\end{remark}

\begin{example}\label{exampleformula}
	Let $\mathcal{P}=(\mathcal{K}_n)_{sc}^+$, $B$ the set of operations of $\mathcal{K}_n$ and $f: B \sqcup 1 \to \{\mathsf{f},\mathsf{h}\}$ as defined in Lemma \ref{lemmaexistencecomplexitymap}. For $c \in B \sqcup 1$ and $\mathbf{d}=(d_1,\ldots,d_k) \in \{\mathsf{f},\mathsf{h}\}^k$, let $l$ be the number of elements $i \in \{1,\ldots,k\}$ such that $d_i=\mathsf{f}$ and $m$ the number of elements $i \in \{1,\ldots,k\}$ such that $d_i=\mathsf{h}$. Of course, $l+m=k$. The objects of $\mathcal{P}_{\mathbf{d}}^f/c$ are black and white $\mathcal{K}_n$-trees with exactly $m$ white vertices. If $f(c)=\mathsf{f}$, there is an extra morphism from the target of this tree to $c$. Note that this extra morphism is unambiguously given by the black and white $\mathcal{K}_n$-tree since the category of operations of $\mathcal{K}_n$ is a poset. If $f(c)=\mathsf{h}$, the tree must have exactly $n$ leaves. The first type of morphisms of $\mathcal{P}_{\mathbf{d}}^f/c$ are contractions of edges between a black and a white vertex. The second type are morphisms between black and white $\mathcal{K}_n$-trees which have exactly the same shape. They are given by morphisms in the category of operations of $\mathcal{K}_n$ for each black vertex.
	
	For example, assume that $c$ is the unique nullary operation of $\mathcal{K}_2$ and $\mathbf{d}=(\mathsf{f},\mathsf{f})$. Then $\mathcal{P}_{\mathbf{d}}^f/c$ contains the subcategory drawn below:
	\[
		\begin{tikzpicture}
			\begin{scope}[shift={(-3,0)}]
				\draw (0,-.6) -- (0,-.2);
				\draw (-.25,.2) -- (0,-.2) -- (.25,.2);
				\draw[fill=white,densely dotted] (-.25,.2) circle (.1);
				\draw (-.25,.25) node[above]{$1$};
				\draw (.25,.25) node[above]{$2$};
				\draw[fill=white,densely dotted] (.25,.2) circle (.1);
				\draw[fill=white] (0,-.2) circle (.25);
				\draw[nearly transparent] (-.15,-.2) -- (.15,-.2);
				\draw[fill] (-.15,-.2) circle (.5pt);
				\draw[fill] (.15,-.2) circle (.5pt);
				\draw (0,-.2) node{\tiny{$1$}};
			\end{scope}
			
			\begin{scope}[shift={(3,0)}]
				\draw (0,-.6) -- (0,-.2);
				\draw (-.25,.2) -- (0,-.2) -- (.25,.2);
				\draw[fill=white,densely dotted] (-.25,.2) circle (.1);
				\draw (-.25,.25) node[above]{$2$};
				\draw (.25,.25) node[above]{$1$};
				\draw[fill=white,densely dotted] (.25,.2) circle (.1);
				\draw[fill=white] (0,-.2) circle (.25);
				\draw[nearly transparent] (-.15,-.2) -- (.15,-.2);
				\draw[fill] (-.15,-.2) circle (.5pt);
				\draw[fill] (.15,-.2) circle (.5pt);
				\draw (0,-.2) node{\tiny{$1$}};
			\end{scope}
			
			\begin{scope}[shift={(0,-2.4)}]
				\draw (0,-.6) -- (0,-.2);
				\draw (-.25,.2) -- (0,-.2) -- (.25,.2);
				\draw[fill=white,densely dotted] (-.25,.2) circle (.1);
				\draw (-.25,.25) node[above]{$2$};
				\draw (.25,.25) node[above]{$1$};
				\draw[fill=white,densely dotted] (.25,.2) circle (.1);
				\draw[fill=white] (0,-.2) circle (.25);
				\draw[nearly transparent] (-.15,-.2) -- (.15,-.2);
				\draw[fill] (-.15,-.2) circle (.5pt);
				\draw[fill] (.15,-.2) circle (.5pt);
				\draw (0,-.2) node{\tiny{$2$}};
			\end{scope}
			
			\begin{scope}[shift={(0,2.6)}]
				\draw (0,-.6) -- (0,-.2);
				\draw (-.25,.2) -- (0,-.2) -- (.25,.2);
				\draw[fill=white,densely dotted] (-.25,.2) circle (.1);
				\draw (-.25,.25) node[above]{$1$};
				\draw (.25,.25) node[above]{$2$};
				\draw[fill=white,densely dotted] (.25,.2) circle (.1);
				\draw[fill=white] (0,-.2) circle (.25);
				\draw[nearly transparent] (-.15,-.2) -- (.15,-.2);
				\draw[fill] (-.15,-.2) circle (.5pt);
				\draw[fill] (.15,-.2) circle (.5pt);
				\draw (0,-.2) node{\tiny{$2$}};
			\end{scope}
			
			\begin{scope}[shift={(.8,.7)}]
				\draw (0,-.27) -- (0,.37);
				\draw[fill=white,densely dotted] (0,.37) circle (.1);
				\draw[fill=white,densely dotted] (0,0) circle (.15);
				\draw[fill] (0,0) circle (.5pt);
				\draw (-.1,0) node[left]{$1$};
				\draw (-.05,.37) node[left]{$2$};
			\end{scope}

			\begin{scope}[shift={(-.6,-.7)}]
				\draw (0,-.27) -- (0,.37);
				\draw[fill=white,densely dotted] (0,.37) circle (.1);
				\draw[fill=white,densely dotted] (0,0) circle (.15);
				\draw[fill] (0,0) circle (.5pt);
				\draw (-.1,0) node[left]{$2$};
				\draw (-.05,.37) node[left]{$1$};
			\end{scope}
			
			\draw[->] (-2.5,.5) -- (-.5,2.3);
			\draw[->] (-2.5,-.5) -- (-.5,-2.3);
			\draw[->] (2.5,.5) -- (.5,2.3);
			\draw[->] (2.5,-.5) -- (.5,-2.3);
			\draw[->] (-2.35,-.2) -- (-1.2,-.6);
			\draw[->] (-2.35,.2) -- (.2,.8);
			\draw[->] (2.35,.2) -- (1.2,.6);
			\draw[->] (2.35,-.2) -- (-.2,-.8);
			\draw[->] (-.3,-1.6) -- (-.6,-1.2);
			\draw[->] (.2,-1.6) -- (.8,.2);
			\draw[->] (.3,1.9) -- (.6,1.5);
			\draw[->] (-.2,1.9) -- (-.8,.1);
		\end{tikzpicture}
	\]
	In the picture above, the white vertices are drawn as dotted circles, while the black vertices are drawn as plain circles. Note that we get a $2$-dimensional sphere. On the other hand, $\mathcal{RK}_2(\mathsf{f},\mathsf{f};\mathsf{f})$ is also equivalent to a $2$-dimensional sphere. In fact, the complexity map $f$ induces a weak equivalence $\mathcal{P}_{\mathbf{d}}^f/c \to \mathcal{RK}_d(d_1,\ldots,d_k;f(c))$ in general. This will be proved in Lemma \ref{lemmatechnical}.
\end{example}

\begin{lemma}\label{lemmaformulacoend}
	Let $\mathcal{P}$ be a categorical operad with set of colours $C$, $f: C \to D$ a function, $c \in C$, $d:=f(c)$ and $\mathbf{d} = (d_1,\ldots,d_k) \in D^k$. Let $\delta: u\mathcal{P} \to \mathrm{Cat}$ be the functor sending $c$ to the comma category $u\mathcal{P}/c$. Then there is an isomorphism
	\[
		\xi_{\mathcal{P}}^f(d_1,\ldots,d_k;d)(\delta,\ldots,\delta)(c) \simeq \mathcal{P}_{\mathbf{d}}^f/c.
	\]
\end{lemma}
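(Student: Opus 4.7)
The plan is to construct an explicit pair of mutually inverse functors between the two sides. First I unfold Definition \ref{definitionrealization}, writing
\[
\xi_{\mathcal{P}}^f(d_1,\ldots,d_k;d)(\delta,\ldots,\delta)(c) \;=\; \int^{(c_1,\ldots,c_k)\in \mathcal{P}_{d_1}\otimes\cdots\otimes\mathcal{P}_{d_k}} \mathcal{P}(c_1,\ldots,c_k;c) \,\times\, \prod_{i=1}^{k} u\mathcal{P}/c_i.
\]
An object of this coend is thus represented by a tuple $(\mathbf{c};o;(e_i,q_i)_i)$, with $f(c_i)=d_i$, $o\in\mathcal{P}(\mathbf{c};c)$, and each $q_i\colon e_i\to c_i$ a unary operation of $\mathcal{P}$; two such representatives are identified, for every morphism $\mathbf{p}\colon\mathbf{c}\to\mathbf{c}'$ in $\mathcal{P}_{d_1}\otimes\cdots\otimes\mathcal{P}_{d_k}$, by the coend relation $(\mathbf{c};o'\circ\mathbf{p};\mathbf{q})\sim(\mathbf{c}';o';\mathbf{p}\circ\mathbf{q})$.

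In the forward direction, I define $\Phi\colon\mathcal{P}_{\mathbf{d}}^f/c\to \xi_{\mathcal{P}}^f(d_1,\ldots,d_k;d)(\delta,\ldots,\delta)(c)$ on objects by $\Phi(o\in\mathcal{P}(\mathbf{c};c)):=[\mathbf{c};o;(c_i,\mathrm{id}_{c_i})_i]$, using that each $(c_i,\mathrm{id}_{c_i})$ is the terminal object of $u\mathcal{P}/c_i$. A morphism $(\mathbf{p},\alpha)\colon o\to o'$ of $\mathcal{P}_{\mathbf{d}}^f/c$ is sent to the composite coend-morphism built by pasting together (i) the horizontal morphism $\alpha\colon o\to o'\circ\mathbf{p}$ in $\mathcal{P}(\mathbf{c};c)$, (ii) the coend identification $[\mathbf{c};o'\circ\mathbf{p};\mathrm{id}_{\mathbf{c}}]=[\mathbf{c}';o';\mathbf{p}]$, and (iii) the canonical morphism $(c_i,p_i)\to(c'_i,\mathrm{id}_{c'_i})$ in each $u\mathcal{P}/c'_i$, given by $p_i$ itself.

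In the opposite direction I define $\Psi[\mathbf{c};o;(e_i,q_i)_i]:=(\mathbf{e},\,o\circ(q_1,\ldots,q_k))$; this is well-defined on equivalence classes thanks to associativity of operadic composition, since $(o'\circ\mathbf{p})\circ\mathbf{q}=o'\circ(\mathbf{p}\circ\mathbf{q})$. Then $\Psi\circ\Phi=\mathrm{id}$ is immediate from $o\circ\mathrm{id}_{\mathbf{c}}=o$, and $\Phi\circ\Psi=\mathrm{id}$ comes down to the equality $[\mathbf{e};o\circ\mathbf{q};\mathrm{id}_{\mathbf{e}}]=[\mathbf{c};o;\mathbf{q}]$ obtained by substituting $\mathbf{p}:=\mathbf{q}$ in the coend identification. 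The corresponding checks on morphisms reduce to a routine bookkeeping of horizontal morphisms versus vertical identifications inside the coend.

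The only delicate point is the substitution $\mathbf{p}:=\mathbf{q}$: it is legal precisely when $\mathbf{q}$ itself is a morphism in $\mathcal{P}_{d_1}\otimes\cdots\otimes\mathcal{P}_{d_k}$, i.e.\ when each source $e_i$ satisfies $f(e_i)=d_i$. This is the place where the interaction between the fibering induced by $f$ and the functor $\delta$ enters, and it is the main technical step to handle with care; once it is in place, $\Phi$ and $\Psi$ assemble into the claimed isomorphism.
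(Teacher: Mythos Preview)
Your approach---unwinding the coend and writing down mutually inverse functors $\Phi$ and $\Psi$---is exactly the standard one, and is essentially what the argument in \cite[Lemma 3.3]{delegergregocondensation} does; the paper itself gives no independent proof but simply cites that reference.

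However, the ``delicate point'' you flag at the end is not merely a bookkeeping subtlety: it is a genuine obstruction that your proof does not overcome. With $\delta(c_i)=u\mathcal{P}/c_i$ as literally stated, an object $(e_i,q_i)\in u\mathcal{P}/c_i$ may have $f(e_i)\neq d_i$, and then your $\Psi[\mathbf{c};o;(e_i,q_i)_i]=(\mathbf{e},o\circ\mathbf{q})$ does \emph{not} land in $\mathcal{P}_{\mathbf{d}}^f/c$, whose objects are by definition tuples $\mathbf{e}$ with $f^k(\mathbf{e})=\mathbf{d}$. Moreover, the coend relation you invoke only runs over morphisms $\mathbf{p}$ of $\mathcal{P}_{d_1}\otimes\cdots\otimes\mathcal{P}_{d_k}$, so it never changes the source colours $e_i$; representatives with $f(e_i)\neq d_i$ are therefore genuinely extra and are not identified with anything in the image of $\Phi$. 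Saying ``once it is in place'' does not close this gap.

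The resolution is to read the hypothesis as $\delta|_{\mathcal{P}_d}(c)=\mathcal{P}_d/c$ rather than $u\mathcal{P}/c$: each input of $\xi_\mathcal{P}^f$ is a functor on $\mathcal{P}_{d_i}$, and the natural ``Yoneda'' cosimplicial object in that slot is the slice \emph{inside that fibre}. The cited Lemma 3.3 of \cite{delegergregocondensation} is stated in the one-fibre situation $|D|=1$, where $u\mathcal{P}=\mathcal{P}_d$ and the distinction disappears; that is why the same argument transports. With $\delta(c_i)=\mathcal{P}_{d_i}/c_i$, every $(e_i,q_i)$ automatically has $f(e_i)=d_i$, your substitution $\mathbf{p}:=\mathbf{q}$ is legitimate, and your $\Phi,\Psi$ are honest mutual inverses. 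You should make this adjustment explicit rather than leaving it as an unaddressed caveat.
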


\begin{proof}
	We can apply the same arguments as in the proof of \cite[Lemma 3.3]{delegergregocondensation}.
\end{proof}

\subsection{Weak equivalence induced by the complexity map}

\begin{lemma}\label{lemmainducedlaxmorphism}
	A lax morphism of operads $f: \mathcal{P} \to \mathcal{Q}$ induces a lax morphism of operads $\mathrm{Coend}_\mathcal{P}^f \to \mathrm{Coend}_\mathcal{Q}^{id} \simeq \mathcal{Q}$.
\end{lemma}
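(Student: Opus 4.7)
The plan is to build the induced lax morphism explicitly by running $f$ through the description of operations of $\mathrm{Coend}_\mathcal{P}^f$ supplied by Lemma \ref{lemmaformulacoend}, with the canonical choice $\delta_\mathcal{P}(c) := u\mathcal{P}/c$ (and similarly for $\mathcal{Q}$). First I would observe that, since unary composition in any lax morphism of operads is strict, $f$ restricts to an ordinary functor $uf: u\mathcal{P} \to u\mathcal{Q}$, which in turn induces natural comparison functors $u\mathcal{P}/c \to u\mathcal{Q}/f(c)$, i.e., a comparison $\delta_\mathcal{P} \Rightarrow (uf)^* \delta_\mathcal{Q}$.

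The key step is, for each $c \in C$ with $f(c) = d$ and each $\mathbf{d} = (d_1,\ldots,d_k)$, to construct a comparison functor
\[
F_c : \mathcal{P}_\mathbf{d}^f/c \longrightarrow \mathcal{Q}_\mathbf{d}^{\mathrm{id}}/d
\]
sending an operation $o \in \mathcal{P}(c_1,\ldots,c_k;c)$ to $f(o) \in \mathcal{Q}(d_1,\ldots,d_k;d)$. On morphisms of the first generating type in $\mathcal{P}_\mathbf{d}^f/c$ (those of the form $o \to m(o',p_1,\ldots,p_k)$ with trivial unary part), one applies $f$ to the extra morphism and post-composes with the lax coherence cell $f(m(o',p_1,\ldots,p_k)) \to m(f(o'),f(p_1),\ldots,f(p_k))$; on morphisms of the second type (with identity extra morphism), one takes the $k$-tuple $(f(p_1),\ldots,f(p_k))$. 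Functoriality of $F_c$ then reduces to the naturality and coherence axioms satisfied by the lax structure of $f$.

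By Lemma \ref{lemmaformulacoend}, an operation in $\mathrm{Coend}_\mathcal{P}^f(d_1,\ldots,d_k;d)$ corresponds to a natural family $(\phi_c: u\mathcal{P}/c \to \mathcal{P}_\mathbf{d}^f/c)_{c \in f^{-1}(d)}$. Post-composing with $F_c$ yields a natural family $u\mathcal{P}/c \to \mathcal{Q}_\mathbf{d}^{\mathrm{id}}/d$ which, combined with the comparison $\delta_\mathcal{P} \Rightarrow (uf)^*\delta_\mathcal{Q}$ and the equivalence $\mathrm{Coend}_\mathcal{Q}^{\mathrm{id}} \simeq \mathcal{Q}$, produces the desired element of $\mathcal{Q}(d_1,\ldots,d_k;d)$.

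The main obstacle will be verifying the lax operad axioms for the resulting assignment. Unit preservation is immediate from $f$ preserving units, but the coherence natural transformations witnessing $g(x \circ_i y) \to g(x) \circ_i g(y)$ must be assembled from the lax coherence cells of $f$ together with the compatibility of the coend construction with the $\circ_i$ operations; checking the pentagon and triangle axioms then reduces to a notationally-heavy but conceptually routine diagram chase, which matches the level of detail at which the earlier lax morphisms in this paper (such as the complexity map of Lemma \ref{lemmaexistencecomplexitymap}) are left to the reader.
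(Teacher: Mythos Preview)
Your construction of the functors $F_c:\mathcal{P}_\mathbf{d}^f/c\to\mathcal{Q}_\mathbf{d}^{\mathrm{id}}/d$ is, after unwinding Lemma~\ref{lemmaformulacoend}, exactly the natural transformation the paper builds; the paper simply produces it more directly from the universal property of the coend. Concretely, the paper takes for each $(c_1,\ldots,c_k,c)$ the composite
\[
\mathcal{P}(c_1,\ldots,c_k;c)\times\delta(c_1)\times\cdots\times\delta(c_k)\xrightarrow{\ \mathrm{pr}\ }\mathcal{P}(c_1,\ldots,c_k;c)\xrightarrow{\ f\ }\mathcal{Q}(d_1,\ldots,d_k;d),
\]
and observes that these descend to a natural transformation $\xi_\mathcal{P}^f(d_1,\ldots,d_k;d)\Rightarrow\mathcal{Q}(d_1,\ldots,d_k;d)$ (the target viewed as a constant functor on $\mathcal{P}_d$). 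The induced functor on $\mathrm{Coend}_\mathcal{P}^f$ is then just post-composition with this transformation. For the lax structure the paper packages the coherence cells of $f$ as \emph{modifications} $\tau:\alpha\Rrightarrow\beta$ between the two composite natural transformations arising from $\circ_i$, rather than verifying the pentagon/triangle identities by hand; this is equivalent to your diagram chase but cleaner.

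There is one genuine problem in your outline. Your final step --- passing from the natural family $(u\mathcal{P}/c\to\mathcal{Q}_\mathbf{d}^{\mathrm{id}}/d)_{c\in f^{-1}(d)}$ to an element of $\mathrm{Coend}_\mathcal{Q}^{\mathrm{id}}$ using the comparison $\delta_\mathcal{P}\Rightarrow(uf)^*\delta_\mathcal{Q}$ --- does not work as stated: precomposition with that comparison induces a map
\[
\underline{\mathrm{Hom}}_{\mathcal{P}_d}\bigl((uf)^*\delta_\mathcal{Q},\,-\bigr)\longrightarrow\underline{\mathrm{Hom}}_{\mathcal{P}_d}\bigl(\delta_\mathcal{P},\,-\bigr),
\]
which points the wrong way for extracting an element of $\underline{\mathrm{Hom}}_{\mathcal{Q}_d}(\delta_\mathcal{Q},\xi_\mathcal{Q}^{\mathrm{id}})$. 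The paper sidesteps this entirely by landing in the \emph{constant} functor at $\mathcal{Q}(d_1,\ldots,d_k;d)$ rather than in $\xi_\mathcal{Q}^{\mathrm{id}}$ pulled back along $uf$, so no change-of-weight argument is needed. You can repair your argument the same way: once you have $F_c$, post-compose to obtain the map $\mathrm{Coend}_\mathcal{P}^f(d_1,\ldots,d_k;d)\to\mathcal{Q}(d_1,\ldots,d_k;d)$ directly, and drop the detour through $\delta_\mathcal{Q}$.
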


\begin{proof}
	Let $(d_1,\ldots,d_k,d) \in D^{k+1}$. For $(c_1,\ldots,c_k,c) \in \mathcal{P}_{d_1} \times \ldots \times \mathcal{P}_{d_k} \times \mathcal{P}_d$, there is a composite
	\begin{equation}\label{equationfunctors}
		\mathcal{P}(c_1,\dots,c_k;c) \times \delta(c_1) \times \ldots \times \delta(c_k) \to \mathcal{P}(c_1,\dots,c_k;c) \to \mathcal{Q}(d_1,\ldots,d_k;d),
	\end{equation}
	where the first functor is the projection and the second is given by $f$. Abusing notations, let $\mathcal{Q}(d_1,\ldots,d_k;d)$ be the constant functor $\mathcal{P}_d \to \mathrm{Cat}$ sending everything to $\mathcal{Q}(d_1,\ldots,d_k;d)$. By the universal property of coends, the functors \eqref{equationfunctors} induce a natural transformation
	\begin{equation}\label{equationnaturaltransformations}
		\xi_\mathcal{P}^f(d_1,\ldots,d_k;d) := \xi_\mathcal{P}^f(d_1,\ldots,d_k;d)(\delta,\ldots,\delta) \Rightarrow \mathcal{Q}(d_1,\ldots,d_k;d).
	\end{equation}
	Recall that, for a category $\mathcal{C}$, two functors $F,G: \mathcal{C} \to \mathrm{Cat}$ and two natural transformations $\alpha,\beta: F \Rightarrow G$, a \emph{modification} $\tau: \alpha \Rrightarrow \beta$ is given by a natural transformation $\tau_c: \alpha_c \Rightarrow \beta_c$ for all $c \in \mathcal{C}$, such that for all $h:c \to d$ in $\mathcal{C}$, the following natural transformations coincide:
	\[
		\xymatrix{
			F(c) \ar[d]_{F(h)} \rtwocell^{\alpha_c}_{\beta_c}{\tau_c} & G(c) \ar[d]^{G(h)} && F(c) \ar[d]_{F(h)} \ar@/^/[r]^{\alpha_c} & G(c) \ar[d]^{G(h)} \\
			F(d) \ar@/_/[r]_{\beta_d} & G(d) && F(d) \rtwocell^{\alpha_d}_{\beta_d}{\tau_c} & G(d)
		}
	\]
	Let $1 \leq i \leq k$, $j \geq 0$, $(d_1,\ldots,d_k,d) \in D^{k+1}$ and $(d'_1,\ldots,d'_j) \in D^j$. The natural transformations \eqref{equationnaturaltransformations} give natural transformations
	\[
		\alpha: \xi_\mathcal{P}^f(d_1,\ldots,d_k;d) \circ_i \xi_\mathcal{P}^f(d'_1,\ldots,d'_j;d_i) \to \mathcal{Q}(d_1,\ldots,d_k;d) \circ_i \mathcal{Q}(d'_1,\ldots,d'_j;d_i)
	\]
	and
	\[
	\beta: \xi_\mathcal{P}^f(d_1,\ldots,d_k;d) \circ_i \xi_\mathcal{P}^f(d'_1,\ldots,d'_j;d_i)\to \mathcal{Q}(d_1,\ldots,d_{i-1},d'_1,\ldots,d'_j,d_{i+1},\ldots,d_k;d).
	\]
	$\beta$ is obtained using the natural transformation \eqref{equationfunctoroperads} coming from the functor-operad structure. $\alpha$ and $\beta$ come equipped with a modification $\tau: \alpha \Rrightarrow \beta$ which is induced by the lax structure of $f$.
	
	Finally, the functors
	\begin{equation}\label{equationinducedfunctorcoend}
		\mathrm{Coend}_\mathcal{P}^f(d_1,\ldots,d_k;d) \to \mathcal{Q}(d_1,\ldots,d_k;d)
	\end{equation}
	are induced by the natural transformations \eqref{equationnaturaltransformations} and the lax structure is induced by the modifications.
\end{proof}

By definition, a lax morphism of operads is a \emph{weak equivalence} if the underlying functors are weak equivalences.

\begin{lemma}\label{lemmatechnical}
	The lax morphism of operad
	\begin{equation}\label{equationimportant}
		\mathrm{SC}(\mathcal{K}_n) \to \mathcal{RK}_n
	\end{equation}
	induced by the complexity map \eqref{equationmapc} and given by Lemma \ref{lemmainducedlaxmorphism} is a weak equivalence.
\end{lemma}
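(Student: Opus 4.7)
The plan is to reduce the global weak equivalence to a fiberwise statement via the coend formula. Using Lemma \ref{lemmaformulacoend}, the diagram $\xi_{(\mathcal{K}_n)_{sc}^+}^f(d_1,\ldots,d_k;d)(\delta,\ldots,\delta) : \mathcal{P}_d \to \mathrm{Cat}$ computes at each object $c$ as the category $((\mathcal{K}_n)_{sc}^+)_{\mathbf{d}}^f/c$, and under this identification the underlying component of the induced lax morphism \eqref{equationimportant} is assembled from the family of functors
\[
F_{c,\mathbf{d}} : ((\mathcal{K}_n)_{sc}^+)_{\mathbf{d}}^f/c \longrightarrow \mathcal{RK}_{n+1}(d_1,\ldots,d_k;f(c))
\]
induced by the complexity map of Lemma \ref{lemmaexistencecomplexitymap}. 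Since both sides of \eqref{equationimportant} are computed as underlined Hom in Thomason's model structure out of the same diagram $\delta$, it suffices to prove that each $F_{c,\mathbf{d}}$ is a Thomason weak equivalence, together with the standard condensation argument \emph{à la} \cite[Section 2]{BataninBergerLattice} that promotes pointwise equivalences of the coend diagram to equivalences of the condensation itself.

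I would prove the local statement by Quillen's Theorem A. Fix $(\mu,\sigma) \in \mathcal{RK}_{n+1}(d_1,\ldots,d_k;f(c))$ and consider the comma category $F_{c,\mathbf{d}} \downarrow (\mu,\sigma)$, whose objects are black-and-white $\mathcal{K}_n$-trees $T$ equipped with an inequality $F_{c,\mathbf{d}}(T) \leq (\mu,\sigma)$ in the target poset. Combinatorially, the shape of $T$ determines, for each pair $i < j$ of leaves/vertices of $T$, either that $i$ lies strictly above $j$ (forcing $\mu_{ij}(T) = n+1$) or that $i$ and $j$ share a maximal common black-vertex ancestor $v$ whose $\mathcal{K}_n$-label $(\mu',\sigma')$ supplies $(\mu_{ij}(T),\sigma_{ij}(T))$. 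I would exhibit a distinguished terminal object of this comma category, namely the canonical tree whose vertical stratification is read off from the pairs with $\mu_{ij} = n+1$ and whose black-vertex labels are read off from the remaining data of $(\mu,\sigma)$. The deformation retraction onto this terminal object would be assembled from zigzags of two elementary moves: collapsing a unit-labeled black vertex through an incident edge, and refining the label of a black vertex along the partial order of $\mathcal{K}_n$. Example \ref{exampleformula} serves as the template for this analysis in the simplest non-trivial case.

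The principal obstacle is handling the Swiss cheese stratification uniformly across the two cases $f(c) = \mathsf{f}$ and $f(c) = \mathsf{h}$. When $f(c) = \mathsf{h}$, the root edge of every tree in $((\mathcal{K}_n)_{sc}^+)_{\mathbf{d}}^f/c$ is $\mathsf{h}$-coloured, forcing the tree to have no extra morphism to an operation of $\mathcal{K}_n$ and forbidding pairs of $\mathsf{h}$-coloured inputs from lying one above the other. I must verify that the canonical terminal object described above actually sits in this Swiss cheese stratum; the constraints on the target $\mathcal{RK}_{n+1}(d_1,\ldots,d_k;\mathsf{h})$, namely $\mu_{ij} \leq n$ for each $\mathsf{h}$-coloured pair (with the appropriate $\sigma$-conditions in mixed cases), are precisely what guarantee that the canonical representative admits no forbidden layering and thus lies in the correct stratum. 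Once this compatibility is checked in each case, Theorem A yields the local weak equivalence $F_{c,\mathbf{d}}$, and hence, via the reduction of the first paragraph, the desired global weak equivalence \eqref{equationimportant}.
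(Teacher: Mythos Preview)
Your reduction to the pointwise statement is correct and matches the paper: using Lemma~\ref{lemmaformulacoend} one is reduced to showing that the functors $F_{c,\mathbf{d}}:((\mathcal{K}_n)_{sc}^+)_{\mathbf d}^f/c\to\mathcal{RK}_n(d_1,\dots,d_k;f(c))$ are weak equivalences, and then invoking Quillen's Theorem~A. This is exactly how the paper begins.

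The gap is in your contractibility argument. You assert that $F_{c,\mathbf d}\!\downarrow\!(\mu,\sigma)$ admits a \emph{terminal} object, but this is not true in general. Morphisms in $((\mathcal{K}_n)_{sc}^+)_{\mathbf d}^f/c$ are given by a $k$-tuple of unary operations $(p_1,\dots,p_k)$ together with a morphism $o\to m(o',p_1,\dots,p_k)$; the $p_i$ involve black vertices whose $\mathcal{K}_n$-labels are only constrained to lie above those of $o$, so there are typically many morphisms between two given objects. For instance, already for $k=2$, $q=(n{+}1,\sigma)$, your ``canonical'' linear two-white-vertex tree receives infinitely many distinct morphisms from any corner tree (one for each choice of black-vertex label $\geq$ the source label). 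So the comma category is contractible but has no terminal object, and the ``deformation retraction via zigzags'' you allude to is precisely what must be constructed, not assumed.

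The paper gets around this by induction on $k$. One chooses $i=\sigma^{-1}(1)$ and defines a functor $F:f/q\to f/q'$ (where $q'=q\circ_i f(o)$ with $o$ the nullary operation) which blackens the $i$-th white vertex or removes the $i$-th leaf. One then shows $F$ is homotopy left cofinal: for each $x\in f/q'$ the fibre $F_x$ does have a terminal object (constructed by a non-trivial splitting of the root vertex according to which edges sit below indices $j$ with $\mu_{ij}\leq n$), and $F_x\hookrightarrow F/x$ has a left adjoint, so $F/x$ is contractible. By induction $f/q'$ is contractible, hence so is $f/q$. This inductive peeling-off of one input at a time is the missing idea in your proposal.
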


\begin{proof}
	To simplify the notations, let $\mathcal{P}=(\mathcal{K}_n)_{sc}^+$ and $\mathcal{Q}=\mathcal{RK}_n$. Let $c \in B \sqcup 1$, where $B$ is the set of operations of $\mathcal{K}_n$, $d := f(c)$ and $\mathbf{d} = (d_1,\ldots,d_k) \in \{\mathsf{f},\mathsf{h}\}^k$. The complexity map $f$ induces a functor
	\begin{equation}\label{equationinducedfunctor}
		\mathcal{P}_\mathbf{d}^f/c \to \mathcal{Q}_\mathbf{d}^{id}/d
	\end{equation}
	which will be denoted by $f$ again, abusing notations. This gives us natural transformations which, according to Lemma \ref{lemmaformulacoend}, are exactly the natural transformations \eqref{equationnaturaltransformations}. Note that if these natural transformations are pointwise weak equivalences, the desired lax morphism of operads \eqref{equationimportant} induced by them is also a weak equivalence. So we will get the desired result if we prove that the functor $f$ of \eqref{equationinducedfunctor} is a weak equivalence for all $c$ and $\mathbf{d}$.
	
	Recall that a functor $F: \mathcal{X} \to \mathcal{Y}$ is \emph{homotopy left cofinal} if for all $y \in \mathcal{Y}$, the \emph{slice category} $F/y$ is contractible \cite[Definition 19.6.1]{hirschhorn}. According to \cite[Theorem A]{quillenthma}, if a functor is homotopy left cofinal, then it is a weak equivalence. So we have to prove that the functor $f$ is homotopy left cofinal.
	
	Note that $\mathcal{Q}_\mathbf{d}^{id}/d$ is simply the category $\mathcal{RK}_n(d_1,\ldots,d_k;d)$. Let $q = (\mu,\sigma) \in \mathcal{Q}_\mathbf{d}^{id}/d$. Let us prove that $f/q$ is contractible. We will follow the arguments of \cite[Lemma 3.4]{delegergregocondensation}. We proceed by induction on $k$.
	
	First let us assume that $k=0$. Then $q$ is the unique nullary operation. Since $\mathcal{Q}_\mathbf{d}^{id}/d = \mathcal{RK}_n(;d)$ is the trivial category, $f/q$ is isomorphic to $\mathcal{P}_\mathbf{d}^f/c$. This category was described in Example \ref{exampleformula}. When $k=0$, its objects can only be corollas whose unique vertex is black. It has a terminal object given by the corolla whose unique vertex is decorated by $c$ if $c \in B$ and the unique nullary operation if $c = *$. Therefore, it is contractible.
	
	Now let us assume that $k \geq 1$. Let $i := \sigma^{-1}(1)$ and $\mathbf{d}'=(d_1,\ldots,d_{i-1},d_{i+1},\ldots,d_k)$. Since $f$ is a lax morphism of operads, we have a natural transformation
	\begin{equation}\label{squareinduction}
		\xymatrix{
			\mathcal{P}_\mathbf{d}^f/c \ar@{}[dr]|{\rotatebox{45}{$\Rightarrow$}} \ar[r]^f \ar[d]_{- \circ_i o} & \mathcal{Q}_\mathbf{d}^{id}/d \ar[d]^{- \circ_i c(o)} \\
			\mathcal{P}_{\mathbf{d}'}^f/c \ar[r]_f & \mathcal{Q}_{\mathbf{d}'}^{id}/d
		}
	\end{equation}
	where $o$ is the unique nullary operation of $\mathcal{P}$ with target $c_i$. Let $q' := q \circ_i f(o)$. The square \eqref{squareinduction} induces a functor
	\[
		F: f/q \to f/q'
	\]
	sending $f(p) \to q$ to the composite
	\[
		f(p \circ_i o) \to f(p) \circ_i f(o) \to q \circ_i f(o) = q'.
	\]
	Concretely, if $d_i=\mathsf{f}$, $F$ turns the white vertex numbered by $i$ to a black vertex and, if necessary, contracts edges between black vertices and removes unary black vertices. If $d_i=\mathsf{h}$, $F$ removes the leaf edge numbered by $i$ and, if necessary, composes the operation which decorates the vertex below this leaf edge with the nullary operation. Note that since $i = \sigma^{-1}(1)$, the white vertex or leaf numbered by $i$ can not be above another white vertex.
	
	We will now prove that $F$ is homotopy left cofinal. Let $x \in f/q'$, given by $p' \in \mathcal{P}_{\mathbf{d}'}^f/c$ together with a map $f(p') \to q'$. We want to prove that $F/x$ is contractible. As in \cite[Section 5.3]{cisinski}, we consider the canonical inclusion functor
	\begin{equation}\label{canonicalinclusionfunctor}
		F_x \to F/x,
	\end{equation}
	where $F_x$ is the fibre of $F$ over $x$.
	
	Let us prove that $F_x$ has a terminal object. We assume without loss of generality that the root vertex of $p'$ is black. Indeed, we can always insert a unary black vertex to the root edge. Let $r$ be the operation which decorates this root vertex. Then $r = (\mu',\sigma') \in \mathcal{K}_n(l)$, where $l$ is the number of input edges of the root vertex. Each element of $\{1,\ldots,l\}$ corresponds to a vertex of the complete graph $r$, as well as an input edge of the root vertex. Let $S \subset \{1,\ldots,l\}$ defined as follows. $s \in S$ if for all $j \in \{1,\ldots,k\}$ such that the white vertex or leaf numbered by $j$ is above $s$, $\mu_{ij} \leq n$. Let $t$ be the following operation of $\mathcal{K}_n$. It is given by a complete graph on the set $S \sqcup 1$. The edge-colouring on $S$ is given by the restriction of $r$ to $S$. Let $\overline{S}$ be the complement of $S$ in $\{1,\ldots,l\}$. The edge between $i \in S$ and $*$ is coloured with the minimum over all $j \in \overline{S}$ of $\mu'_{ij}$. Let $t'$ be the operation of $\mathcal{K}_n$ given by restriction of $r$ to $\overline{S}$. Let $r'$ be the following black and white $\mathcal{K}_n$-tree. It has exactly two vertices, one black and one white. The root vertex is black and decorated with $t$. The white vertex is above the incoming edge of the root vertex which corresponds to $*$. It is decorated with $t'$. Now let $p$ be the black and white $\mathcal{K}_n$-tree obtained by inserting $r'$ inside the root vertex of $p'$. Then $p$ is the terminal object of $F_x$.
	
	Finally, let us prove that the $F_x$ is a reflective subcategory of $F/x$. That is, we want to prove that the functor \eqref{canonicalinclusionfunctor} has a left adjoint $L$. Let $F(y) \to x$ be an object in $F/x$. Let $v$ be the black vertex of $F(y)$ which takes the place of the $i$-th white vertex which was in $y$. We get a uniquely defined black vertex $w$ of $x$ as the image of $v$ through the map $F(y) \to x$. If the map $F(y) \to x$ contracts an edge between $v$ and a white vertex, $w$ is given by inserting a unary black vertex. We define $L(y)$ as the black vertex $w$ in $x$ back to a white vertex. The unit of the adjunction is given by the map $F(y) \to x$. The counit is the identity.
	
	In conclusion, the functor \eqref{canonicalinclusionfunctor} is a right adjoint, so it is a weak equivalence. Since, for all $x \in f/q'$, $F_x$ is contractible, so is $F/x$. We deduce that $F$ is homotopy left cofinal. Therefore it is a weak equivalence. By induction, $f/q$ is contractible for all $q \in \mathcal{Q}_\mathbf{d}^{id}/d$. So $f$ is also homotopy left cofinal.
\end{proof}

\subsection{Proof of the main theorem}

\begin{lemma}\label{lemmazigzagwe}
	Let $\mathcal{P}$ and $\mathcal{Q}$ be two categorical operads. If there is a lax morphism of operads $f: \mathcal{P} \to \mathcal{Q}$ which is a weak equivalence, then $\mathcal{P}$ and $\mathcal{Q}$ are (strictly) weakly equivalent.
\end{lemma}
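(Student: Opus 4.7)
The plan is to construct an intermediate categorical operad $\mathcal{R}$ playing the role of a mapping cylinder for $f$, together with \emph{strict} morphisms of operads $\pi_1:\mathcal{R}\to\mathcal{P}$ and $\pi_2:\mathcal{R}\to\mathcal{Q}$, and then show that both are weak equivalences. This yields the desired zig-zag $\mathcal{P}\xleftarrow{\pi_1}\mathcal{R}\xrightarrow{\pi_2}\mathcal{Q}$.

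Concretely, I would define $\mathcal{R}$ to have the same set of colours as $\mathcal{P}$, and for a tuple $(c_1,\ldots,c_k,c)$ set $\mathcal{R}(c_1,\ldots,c_k;c)$ to be the category whose objects are triples $(p,q,\alpha)$ with $p\in\mathcal{P}(c_1,\ldots,c_k;c)$, $q\in\mathcal{Q}(f(c_1),\ldots,f(c_k);f(c))$, and $\alpha\colon f(p)\to q$ a morphism in $\mathcal{Q}(f(c_1),\ldots,f(c_k);f(c))$; a morphism $(p,q,\alpha)\to(p',q',\alpha')$ is a pair $(\phi,\psi)$ of morphisms in $\mathcal{P}$ and $\mathcal{Q}$ respectively, making the evident square with $\alpha$ and $\alpha'$ commute. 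Operadic composition is defined using the lax structure of $f$: given $(p_1,q_1,\alpha_1)$ and $(p_2,q_2,\alpha_2)$, I set $(p_1,q_1,\alpha_1)\circ_i(p_2,q_2,\alpha_2):=(p_1\circ_i p_2,\,q_1\circ_i q_2,\,\alpha')$ where $\alpha'$ is the composite
\[
f(p_1\circ_i p_2)\longrightarrow f(p_1)\circ_i f(p_2)\xrightarrow{\alpha_1\circ_i\alpha_2} q_1\circ_i q_2
\]
with the first arrow being the laxor of $f$. Associativity, unitality and equivariance for $\mathcal{R}$ then follow directly from the coherence axioms of the lax morphism $f$. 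The projections $\pi_1(p,q,\alpha):=p$ and $\pi_2(p,q,\alpha):=q$ (which on colours sends $c\mapsto f(c)$) are strict morphisms of operads by construction.

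To see that $\pi_1$ is a weak equivalence, I exhibit an explicit section $s:\mathcal{P}\to\mathcal{R}$, $s(p):=(p,f(p),\mathrm{id}_{f(p)})$, which is itself a strict morphism of operads. Then $\pi_1\circ s=\mathrm{id}_{\mathcal{P}}$, and there is a natural transformation $s\circ\pi_1\Rightarrow\mathrm{id}_{\mathcal{R}}$ whose component at $(p,q,\alpha)$ is $(\mathrm{id}_p,\alpha)$. Hence each functor $\pi_1:\mathcal{R}(c_1,\ldots;c)\to\mathcal{P}(c_1,\ldots;c)$ is a categorical equivalence, and therefore a Thomason weak equivalence. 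For $\pi_2$, I compare the two functors $\pi_2$ and $f\circ\pi_1$ from $\mathcal{R}(c_1,\ldots;c)$ to $\mathcal{Q}(f(c_1),\ldots;f(c))$: they are connected by the natural transformation whose component at $(p,q,\alpha)$ is precisely $\alpha\colon f(p)\to q$. They therefore induce homotopic maps on nerves; since $\pi_1$ is a weak equivalence and $f$ is a weak equivalence by hypothesis, $f\circ\pi_1$ is a weak equivalence, and so is $\pi_2$.

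The only non-trivial point is verifying that the composition on $\mathcal{R}$ is strictly associative (and equivariant), which amounts to pasting two layers of laxors and checking that the resulting $2$-cells in $\mathcal{Q}$ agree; this is exactly the content of the coherence axioms for a lax morphism of operads as in \cite{weberoperads}, so the verification is routine diagram-chasing and I would relegate it to the reader.
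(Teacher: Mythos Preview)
Your construction of $\mathcal{R}$ is exactly the paper's $\hat{\mathcal{P}}$ (the comma category $f/\mathrm{id}$ in each arity), and your overall argument matches the paper's proof essentially line for line. One small correction: your section $s(p)=(p,f(p),\mathrm{id})$ is \emph{not} a strict morphism of operads in general, since $s(p_1\circ_i p_2)=(p_1\circ_i p_2,\,f(p_1\circ_i p_2),\,\mathrm{id})$ while $s(p_1)\circ_i s(p_2)=(p_1\circ_i p_2,\,f(p_1)\circ_i f(p_2),\,\text{laxor})$, and these differ unless $f$ is strict. This does not affect your argument, however, because you only use $s$ pointwise to exhibit each $\pi_1$ as a categorical equivalence; the paper phrases this more carefully by saying that $\pi$ has a \emph{pointwise} left adjoint $\lambda$, and then concludes $\pi_2$ is a weak equivalence via $\pi_2\circ\lambda=f$ and 2-out-of-3, which is equivalent to your natural-transformation argument.
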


\begin{proof}
	Let $C$ be the set of colours of $\mathcal{P}$. Let $\hat{\mathcal{P}}$ be the following $C$-coloured operad. For $(c_1,\ldots,c_k,c) \in C^{k+1}$, we write
	\[
		f: \mathcal{P}(c_1,\ldots,c_k;c) \to \mathcal{Q}(f(c_1),\ldots,f(c_k);f(c))
	\]
	again for the functor given as part of the structure of the lax morphism $f$. Let
	\[
		\hat{\mathcal{P}}(c_1,\ldots,c_k;c)
	\]
	be the comma category $f/id$, as defined in \cite[Section II.6]{maclane}. Explicitly, it is the category whose
	\begin{itemize}
		\item objects are triples $(p,q,\alpha)$, where $p \in \mathcal{P}(c_1,\ldots,c_k;c)$, $q \in \mathcal{Q}(f(c_1),\ldots,f(c_k);f(c))$ and $\alpha: f(p) \to q$ is a morphism in $\mathcal{Q}(f(c_1),\ldots,f(c_k);f(c))$,
		
		\item a morphism $(p,q,\alpha) \to (p',q',\alpha')$ is given by morphisms $\phi: p \to p'$ and $\psi: q \to q'$ such that $\psi \alpha = \alpha' f(\phi)$.
	\end{itemize}
	It is immediate to see that $\hat{\mathcal{P}}$ has the structure of an operad. Moreover, there are obvious (strict) morphisms of operads
	\[
		\mathcal{P} \xleftarrow{\pi} \hat{\mathcal{P}} \xrightarrow{\psi} \mathcal{Q}
	\]
	given by the projections. $\pi$ has a pointwise left adjoint $\lambda$, which sends $p$ to $(p,f(p),id)$. In particular, it is a weak equivalence. Since $\psi \lambda = f$, $\psi$ is also a weak equivalence.
\end{proof}

\begin{theorem}\label{theoremswisscheese}
	The categorical operads $\mathrm{SC}(\mathcal{K}_n)$ and $\mathcal{RK}_n$ are weakly equivalent.
\end{theorem}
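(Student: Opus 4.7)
The proof will be essentially a one-line assembly of the technical machinery already established in this section. Given the complexity map $f : (\mathcal{K}_n)_{sc}^+ \to \mathcal{RK}_n$ from Lemma \ref{lemmaexistencecomplexitymap}, I would first invoke Lemma \ref{lemmainducedlaxmorphism} to descend this lax morphism through the condensation construction. Since by definition $\mathrm{SC}(\mathcal{K}_n) = \mathrm{Coend}_{(\mathcal{K}_n)_{sc}^+}^f$ (for the target function $f : B \sqcup 1 \to \{\mathsf{f},\mathsf{h}\}$) and $\mathrm{Coend}_{\mathcal{RK}_n}^{id} \simeq \mathcal{RK}_n$, this yields a lax morphism of categorical operads $\mathrm{SC}(\mathcal{K}_n) \to \mathcal{RK}_n$.

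Next, I would apply Lemma \ref{lemmatechnical}, which is precisely the statement that the lax morphism obtained this way is a weak equivalence, i.e.\ is pointwise a weak equivalence of small categories with respect to Thomason's model structure. At this stage the two operads are lax-equivalent, but the theorem asks for a (strict) weak equivalence of categorical operads. This last gap is closed by Lemma \ref{lemmazigzagwe}: passing through the comma-category operad $\widehat{\mathrm{SC}(\mathcal{K}_n)}$ produces a zig-zag
\[
\mathrm{SC}(\mathcal{K}_n) \xleftarrow{\pi} \widehat{\mathrm{SC}(\mathcal{K}_n)} \xrightarrow{\psi} \mathcal{RK}_n
\]
of strict morphisms of categorical operads in which $\pi$ admits a pointwise left adjoint (hence is a weak equivalence) and $\psi$ is a weak equivalence because the lax morphism it factors is.

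There is no serious obstacle remaining at this stage, since all of the genuinely technical work has been absorbed into the preceding lemmas. The hardest step in the overall programme is of course Lemma \ref{lemmatechnical} itself, whose proof relies on Quillen's Theorem A together with an induction on the number of inputs and an analysis of the fibres of the induced functor $f : \mathcal{P}_{\mathbf{d}}^f/c \to \mathcal{Q}_{\mathbf{d}}^{id}/d$; but once that is in hand, the theorem follows immediately by chaining Lemmas \ref{lemmainducedlaxmorphism}, \ref{lemmatechnical}, and \ref{lemmazigzagwe}.
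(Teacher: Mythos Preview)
Your proposal is correct and matches the paper's proof essentially verbatim: the paper simply states that the theorem follows immediately from Lemma \ref{lemmatechnical} and Lemma \ref{lemmazigzagwe}, which is exactly the chain of reasoning you describe (with Lemma \ref{lemmainducedlaxmorphism} already absorbed into the statement of Lemma \ref{lemmatechnical}). The only minor slip is an indexing inconsistency that the paper itself contains: Lemma \ref{lemmaexistencecomplexitymap} lands in $\mathcal{RK}_{n+1}$ while Lemma \ref{lemmatechnical} and the theorem speak of $\mathcal{RK}_n$, so you should not worry about having written $\mathcal{RK}_n$ for the codomain of the complexity map.
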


\begin{proof}
	It follows immediately from Lemma \ref{lemmatechnical} and Lemma \ref{lemmazigzagwe}.
\end{proof}

\begin{corollary}\label{corollaswisscheeseaction}
	If $A$ is a $\mathcal{K}_n$-algebra, then the pair $(A,\mathrm{Hoch}(A))$ is an $\mathcal{RK}_n$-algebra.
\end{corollary}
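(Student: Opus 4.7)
The plan is to deduce the corollary immediately from Lemma \ref{lemmaalgebra} combined with Theorem \ref{theoremswisscheese}. The first step is to specialise Lemma \ref{lemmaalgebra} to $\mathcal{P}=\mathcal{K}_n$: the $\mathcal{K}_n$-algebra structure on $A$ produces an $\mathrm{SC}(\mathcal{K}_n)$-algebra structure on the pair $(\mathrm{Hoch}(A),A)$, the two colours of $\mathrm{SC}(\mathcal{K}_n)$ being indexed by the condensation function $f:B \sqcup 1 \to 1 \sqcup 1$ used to define $\mathrm{SC}(\mathcal{P})$.

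For the second step I would invoke Theorem \ref{theoremswisscheese}, whose proof via Lemma \ref{lemmazigzagwe} provides a concrete zigzag of strict operad morphisms $\mathrm{SC}(\mathcal{K}_n) \xleftarrow{\pi} \widehat{\mathrm{SC}(\mathcal{K}_n)} \xrightarrow{\psi} \mathcal{RK}_n$ in which both $\pi$ and $\psi$ are weak equivalences. Restriction along $\pi$ turns the algebra of Step~1 into a $\widehat{\mathrm{SC}(\mathcal{K}_n)}$-algebra, and transporting along the weak equivalence $\psi$ in the homotopy category of operads then yields an $\mathcal{RK}_n$-algebra structure on the same pair.

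The only remaining bookkeeping is the matching of colours: tracking the complexity map of Lemma \ref{lemmaexistencecomplexitymap} through the construction, the "new" colour of $\mathrm{SC}(\mathcal{K}_n)$ created by the plus construction (the image of $B$ under the target map $B\to 1$, which indexes $\mathrm{Hoch}(A)$) maps to the full colour $\mathsf{f}$ of $\mathcal{RK}_n$, while the original colour $* \in 1$ (which indexes $A$) maps to the half colour $\mathsf{h}$. This identifies the resulting $\mathcal{RK}_n$-algebra with the pair $(A,\mathrm{Hoch}(A))$ in the ordering stated. Since all of the substantive content has been packaged into the preceding lemma and theorem, no significant obstacle is expected; the entire proof reduces to spelling out these two invocations and the colour matching.
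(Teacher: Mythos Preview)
Your proposal is correct and follows essentially the same approach as the paper: the paper's proof reads in its entirety ``It follows immediately from Lemma \ref{lemmaalgebra} and Theorem \ref{theoremswisscheese},'' and you have simply unpacked those two invocations together with the colour bookkeeping. The one point worth flagging is that ``transporting along the weak equivalence $\psi$ in the homotopy category'' to obtain an $\mathcal{RK}_n$-structure \emph{on the same pair} is a homotopical statement rather than a strict one; the paper is equally tacit about this, so you are not adding any gap that was not already present.
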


\begin{proof}
	It follows immediately from Lemma \ref{lemmaalgebra} and Theorem \ref{theoremswisscheese}.
\end{proof}

\section{Action on the Hochschild-Pirashvili cochains}\label{sectionhochschildpirashvili}

\subsection{Simplicial spheres}

\begin{definition}
	For $n \geq 1$, let $S^n := \Delta^n / \partial \Delta^n$ be the \emph{$n$-dimensional simplicial sphere}.
\end{definition}

\begin{remark}\label{remarksimplicialsphere}
	There is a canonical bijection \cite[page 35]{BataninBergerLattice}
	\[
	S^n(m) \simeq {m \choose n} \sqcup 1,
	\]
	where ${m \choose n}$ is the set of subsets of $\{1,\ldots,m\}$ which have $n$ elements and $1 = \{*\}$ is the singleton set. Indeed, $\Delta^n(m)$ is the set of order-preserving functions from $m$ to $n$. The functions in the interior of $\Delta^n(m)$, that is the function which are not in $\partial \Delta^n(m)$, are the surjective ones. By Joyal duality, they correspond to order-preserving injections from $\{1,\ldots,n\}$ to $\{1,\ldots,m\}$.
	
	Moreover, the morphisms of $\Delta$ correspond to planar trees with black vertices and a unique white vertex, such that there are no unary black vertices or edges between two black vertices. The source is given by the number of edges above the white vertex and the target is given by the number of leaves. For example, the planar tree corresponding to the map $f: 3 \to 5$ given by
	\begin{equation}\label{equationmorphismindelta}
	0,1,2,3 \mapsto 1,2,4,5
	\end{equation}
	is the following:
	\[
	\begin{tikzpicture}
		\foreach \r in {0,...,5}{
			\draw (\r,0) node{${\r}$};
		}
		\foreach \r in {0,...,3}{
			\draw (\r*.5+2.25,-1) node{${\r}$};
		}
		\draw (2.5,0) -- (3,-.5) -- (3.5,0);
		\draw (3,-.5) -- (3,-1.5) -- (1.5,0);
		\draw (.5,0) -- (2.5,-2) -- (4.5,0);
		\draw (2.5,-2) -- (2.5,-2.5);
		
		\draw[fill] (3,-.5) circle (1.2pt);
		\draw[fill=white] (3,-1.5) circle (1.2pt);
		\draw[fill] (2.5,-2) circle (1.2pt);
	\end{tikzpicture}
	\]
	For a morphism $f: l \to m$ in $\Delta$ and $i \in \{1,\ldots,m\}$, let $\lambda_i$ be the $i$-th leaf of the planar tree corresponding to $f$. Then
	\[
		S^1(f)(k) = 
		\begin{cases}
		j &\text{if $\lambda_i$ is above the $j$-th incoming edge of the white vertex,} \\
		* &\text{if $\lambda_i$ is not above the white vertex.}
		\end{cases}
	\]
	For $n > 1$ and $X \subset \{1,\ldots,m\}$ with $n$ elements,
	\[
		S^n(f)(X) = 
		\begin{cases}
		S^1(f)(X) &\text{if this set is a subset of $\{1,\ldots,l\}$ which has $n$ elements,} \\
		* &\text{otherwise.}
		\end{cases}
	\]
	For example, if $f$ is the map \eqref{equationmorphismindelta}, then $S^2(f)$ is the map
	\[
	12,13,14,15,23,24,25,34,35,45 \mapsto *,*,*,*,12,12,13,*,23,23.
	\]
\end{remark}

\subsection{The Hochschild-Pirashvili cochains}

\begin{definition}
	Let $\mathrm{FSet}_*$ be the (skeletal) category of finite pointed sets. Its objects are non-negative integers and morphisms $l \to m$ are any functions $\{0,\ldots,l\} \to \{0,\ldots,m\}$ sending $0$ to $0$. It is a symmetric monoidal category with monoidal product given by pointed union $\vee$.
\end{definition}

\begin{definition}
	Let $\epsilon: 0 \to 1$, $\mu: 2 \to 1$ and $\rho: 1 \to 0$ be the morphisms in $\mathrm{FSet}_*$ represented in the picture below:
	\[
	\begin{tikzpicture}
	\draw (-1,-.5) node{$\epsilon:$};
	\draw[fill] (0,0) circle (1pt) node[above]{$0$};
	\draw[fill] (-.5,-1) circle (1pt) node[below]{$0$};
	\draw[fill] (.5,-1) circle (1pt) node[below]{$1$};
	\draw[->] (0,0) -- (-.25,-.5);
	\draw (-.25,-.5) -- (-.5,-1);
	
	\begin{scope}[shift={(4.5,0)}]
	\draw (-1.5,-.5) node{$\mu:$};
	\draw[fill] (-1,0) circle (1pt) node[above]{$0$};
	\draw[fill] (0,0) circle (1pt) node[above]{$1$};
	\draw[fill] (1,0) circle (1pt) node[above]{$2$};
	\draw[fill] (-.5,-1) circle (1pt) node[below]{$0$};
	\draw[fill] (.5,-1) circle (1pt) node[below]{$1$};
	\draw[->] (-1,0) -- (-.75,-.5);
	\draw (-.75,-.5) -- (-.5,-1);
	\draw[->] (0,0) -- (.25,-.5);
	\draw[->] (1,0) -- (.75,-.5);
	\draw (.25,-.5) -- (.5,-1) -- (.75,-.5);
	\end{scope}
	
	\begin{scope}[shift={(9,0)}]
	\draw (-1,-.5) node{$\rho:$};
	\draw[fill] (0,-1) circle (1pt) node[below]{$0$};
	\draw[fill] (-.5,0) circle (1pt) node[above]{$0$};
	\draw[fill] (.5,0) circle (1pt) node[above]{$1$};
	\draw[->] (-.5,0) -- (-.25,-.5);
	\draw[->] (.5,0) -- (.25,-.5);
	\draw (-.25,-.5) -- (0,-1) -- (.25,-.5);
	\end{scope}
	\end{tikzpicture}
	\]
\end{definition}

\begin{definition}
	For a closed monoidal category $\mathcal{E}$ and $M \in \mathcal{E}$, let $\otimes_M$ be the monoidal structure on the image of the contravariant endofunctor
	\[
	\mathcal{E}(-,M): \mathcal{E} \to \mathcal{E}.
	\]
	given by
	\[
		\mathcal{E}(A,M) \otimes_M \mathcal{E}(B,M) := \mathcal{E}(A \otimes B,M).
	\]
\end{definition}

\begin{definition}\cite[Section 2.9]{BataninBergerLattice}
	Let $\mathcal{E}$ be a symmetric closed monoidal category, $A$ a commutative monoid in $\mathcal{E}$ and $M$ an $A$-module. Let
	\[
		\mathcal{L}(A,M):\mathrm{FSet}_* \to \mathcal{E}
	\]
	be the unique contravariant functor sending $1$ to $\mathcal{E}(A,M)$, $\epsilon$, $\mu$ and $\rho$ to the maps corresponding to the multiplication of $A$, the unit of $A$ and the action of $M$ on $A$ respectively, and which is strong monoidal for the monoidal products $\vee$ and $\otimes_M$.
\end{definition}

\begin{definition}\cite[Section 1.7]{pirashvili}\label{definitionhochschildpirashvili}
	Let $A$ be a commutative algebra, that is a commutative monoid in the category of vector spaces, and $M$ an $A$-module. We consider the covariant functor obtained as the composite of the contravariant functor
	\[
		\Delta \xrightarrow{S^d} \mathrm{FSet}_* \xrightarrow{\mathcal{L}(A,M)} \mathrm{Vect}.
	\]
	The \emph{Hochschild-Pirashvili cochain of order $d$ of $A$ with coefficients in $M$} is defined as the cochain complex associated to this cosimplicial vector space.
\end{definition}

\subsection{Construction of a homotopy left cofinal functor}

\begin{lemma}\label{lemmacofinalfunctor}
	For $n \geq 1$, there is a functor
	\[
	\alpha_n: \Delta \to \Delta\mathcal{K}_n.
	\]
\end{lemma}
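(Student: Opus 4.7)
The plan is to use the combinatorial bijection of Remark \ref{remarksimplicialsphere}, which identifies morphisms in $\Delta$ with planar trees carrying a unique white vertex, no unary black vertices, and no pair of adjacent black vertices. By Proposition \ref{propositionpplusstar}, these are exactly the underlying shapes of morphisms in $\Delta\mathcal{K}_n = u(\mathcal{K}_n)_*^+$ once their vertices are decorated with operations of $\mathcal{K}_n$. So building $\alpha_n$ will amount to picking a canonical decoration on these planar trees, and checking that composition in $\Delta$ (tree substitution at the white vertex) matches composition in $\Delta\mathcal{K}_n$ (the multiplication of Definition \ref{definitionpplus}).

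On objects, for $m$ an object of $\Delta$, I would set
\[
\alpha_n(m) := (\mu^{\mathrm{max}},\mathrm{id}) \in \mathcal{K}_n(m),
\]
where $\mu^{\mathrm{max}}_{ij} = n$ for all $1 \leq i < j \leq m$. The key stability property, immediate from the explicit description of $\circ_i$ in $\mathcal{K}_n$, is that inserting a graph with all edges coloured $n$ into a vertex of another such graph again produces a graph with all edges coloured $n$, and composing identity permutations yields the identity. Hence the class $\{\alpha_n(k)\}_{k \geq 0}$ is closed under the operadic multiplication of $\mathcal{K}_n$.

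On morphisms, given $f: l \to m$ in $\Delta$, I would take its associated planar tree $T_f$ and decorate it into a black and white $\mathcal{K}_n$-tree $T_f^{\mathrm{dec}}$ by labelling the unique white vertex with $\alpha_n(l)$ and every black vertex of arity $a$ with $\alpha_n(a) \in \mathcal{K}_n(a)$. Since no black vertex is unary, this decoration never conflicts with the ban on unit-decorated black vertices. The stability property above forces $t(T_f^{\mathrm{dec}}) = \alpha_n(m)$, so the extra morphism $\epsilon_f : t(T_f^{\mathrm{dec}}) \to \alpha_n(m)$ required by Proposition \ref{propositionpplusstar} can be taken to be the identity, and I set $\alpha_n(f) := (T_f^{\mathrm{dec}},\mathrm{id})$.

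To conclude, the identity $\mathrm{id}_m$ corresponds under Remark \ref{remarksimplicialsphere} to the white corolla with $m$ leaves, yielding $\alpha_n(\mathrm{id}_m) = \mathrm{id}_{\alpha_n(m)}$; and for composable $f:l \to m$ and $g:m \to k$, the planar tree $T_{g \circ f}$ is obtained by substituting $T_f$ into the unique white vertex of $T_g$, which is precisely the insertion of $\mathcal{K}_n$-trees implementing the multiplication in $(\mathcal{K}_n)_*^+$ from Definition \ref{definitionpplus}. The canonical decoration is preserved by this insertion thanks to the stability property, and both extra morphisms being identities, the composite in $\Delta\mathcal{K}_n$ equals $\alpha_n(g \circ f)$. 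The main obstacle is this last verification: the careful bookkeeping needed to match planar-tree substitution in $\Delta$ with the tree-insertion-plus-combing step of Definition \ref{definitionpplus}, in particular tracking the compatibility of the linear orderings of leaves. This is routine given Remark \ref{remarksimplicialsphere}, but is the step that requires the most care.
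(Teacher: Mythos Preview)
Your construction does yield a functor $\Delta \to \Delta\mathcal{K}_n$, so in the narrow sense the lemma is proved. But it is a genuinely different functor from the one the paper builds, and the difference is decisive for everything that follows.

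The paper's $\alpha_n$ sends $m$ to an operation of arity $\binom{m}{n}$: the vertices of the complete graph are identified with the $n$-element subsets of $\{1,\dots,m\}$ in lexicographic order, and the colour $\mu_{ij}$ records the first position at which the two corresponding subsets differ. On a morphism $f$ of $\Delta$ the paper then uses the fibres of the pointed map $S^n(f)$ from Remark~\ref{remarksimplicialsphere} to decorate the black vertices of the resulting $\mathcal{K}_n$-tree. Your $\alpha_n$, by contrast, lands in arity $m$ with the constant maximal colouring; it is essentially the $n=1$ construction (where $\Delta\mathcal{K}_1 \simeq \Delta$) pushed forward along the operad map $\mathcal{A}ss \to \mathcal{K}_n$ sending $\sigma$ to $(\mu^{\max},\sigma)$.

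This lemma is not an end in itself. The next lemma asserts that $\alpha_n$ is homotopy left cofinal, and Theorem~\ref{theoremequivalencehochschild} requires the square \eqref{equationcommutativesquare} to commute, which forces $\alpha_n(m)$ to have arity $|S^n(m)|-1=\binom{m}{n}$. For $n>1$ your functor has the wrong arity, so it cannot make that square commute, and the cofinality argument (which is tailored to the paper's specific edge-colouring) would have to be redone from scratch and very likely fails. Your approach settles the bare existence statement, but it does not produce the object the rest of Section~\ref{sectionhochschildpirashvili} actually uses; for $n=1$ the two constructions coincide, since then $\binom{m}{1}=m$ and the colouring is forced.
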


\begin{proof}
	Let us recall that, by Definition \ref{definitioncategorydeltap}, $\Delta\mathcal{K}_n$ is the category whose object are operations of $\mathcal{K}_n$. Also recall that a $\mathcal{K}_n$-tree is a combed tree with vertices decorated with operations of $\mathcal{K}_n$. A morphism is given by a black and white $\mathcal{K}_n$-tree with a unique white vertex, together with an extra morphism from the target of this $\mathcal{K}_n$-tree. The source of the morphism is given by the operation which decorates the white vertex. The target is given the target of the extra morphism.
	
	For $m \geq 0$, we define $\alpha_n(m) := (\mu,\sigma) \in \mathcal{K}_n(k)$, where
	\[
	k := \frac{m!}{n!(m-n)!}
	\]
	and $(\mu,\sigma)$ is given as follows. First, note that ${m \choose n}$ is equipped with the lexicographic order. Moreover, there is a uniquely determined order-preserving bijection $\{1,\ldots,k\} \simeq {m \choose n}$. Now for $1 \leq i < j \leq k$, let $\{i_1,\ldots,i_n\}$ and $\{j_1,\ldots,j_n\}$ be the subsets of $\{1,\ldots,m\}$ corresponding to $i$ and $j$ respectively. We define $\mu_{ij} := \lambda$, where $\lambda$ is the lowest element in $\{1,\ldots,n\}$ such that $i_\lambda \neq j_\lambda$.
	
	Now let $f: l \to m$ be a morphism in $\Delta$. Let $g := S^n(f)$. We define $\alpha_n(f)$ as the following black and white $\mathcal{K}_n$-tree. The root vertex is a black vertex decorated with $g^{-1}(*)$. The edge above the root which corresponds to $*$ has a white vertex above it. The white vertex is decorated with $\alpha_n(l)$. The edge above the white vertex corresponding to $S \in \alpha_n(l)$ has a black vertex above it which is decorated with $g^{-1}(S)$, if $g^{-1}(S)$ is not a singleton. The extra linear order on the set of leaves of the tree reorders the fibers. The morphism from the target of this tree to $\alpha_n(m)$ is uniquely determined, since $\mathcal{K}_n(k)$ is a poset. For example, if $f: 3 \to 5$ is the map \eqref{equationmorphismindelta} of Remark \ref{remarksimplicialsphere}, then $\alpha_2(f)$ looks as follows:
	\[
		\begin{tikzpicture}
			\draw (0,-.3) -- (0,0);
			\draw[fill] (0,0) circle (1.7pt);
			\draw (-.25,1) -- (0,0) -- (.25,1);
			\draw (-.75,1) -- (0,0) -- (.75,1);
			\draw (-1.25,1) -- (0,0) -- (1.25,1);
			
			\draw (-1.65,1.6) -- (-1.25,1) -- (-.85,1.6);
			\draw (-1.25,1) -- (-1.25,1.6);
			\draw[fill=white] (-1.25,1) circle (1.7pt);
			
			\draw (-1.85,2.05) -- (-1.65,1.6) -- (-1.45,2.05);
			\draw (-1.05,2.05) -- (-.85,1.6) -- (-.65,2.05);
			\draw[fill] (-1.65,1.6) circle (1.7pt);
			\draw[fill] (-.85,1.6) circle (1.7pt);
			
			\draw (-1.85,2.05) node[above]{\small{$5$}};
			\draw (-1.45,2.05) node[above]{\small{$6$}};
			\draw (-1.25,1.6) node[above]{\small{$7$}};
			\draw (-1.05,2.05) node[above]{\small{$9$}};
			\draw (-.65,2.05) node[above]{\small{$10$}};
			\draw (-.75,1) node[above]{\small{$1$}};
			\draw (-.25,1) node[above]{\small{$2$}};
			\draw (.25,1) node[above]{\small{$3$}};
			\draw (.75,1) node[above]{\small{$4$}};
			\draw (1.25,1) node[above]{\small{$8$}};
		\end{tikzpicture}
	\]
	Indeed, the root vertex has $6$ edges above it corresponding to the $6$ elements in the fibre $g^{-1}(*) = \{*,12,13,14,15,34\}$. The white vertex has $3$ edges above it corresponding to the $3$ elements in $\alpha_2(3)$. The first and the last edges above the white vertex both have a black vertex above it because they correspond to the $12$ and $23$ respectively, and both $g^{-1}(12)$ and $g^{-1}(23)$ have two elements.
	
	It is straightforward to check that $\alpha_n$ is indeed a functor.
\end{proof}

\begin{remark}
	It might help to consider $\alpha_n(m)$ as the configuration of $\frac{m!}{n!(m-n)!}$ points in $\mathbb{R}^n$. For example, when $m=6$ and $n=2$, the configuration of points looks as follows:
	\[
	\begin{tikzpicture}[scale=.5]
	\foreach \r in {1,...,5}{
		\foreach \s in {1,...,\r}{
			\draw[fill] (\r,\s) circle (1.6pt);
		}
	}
	\end{tikzpicture}
	\]
	Moreover, one can construct a function $\pi$ from the space of configuration of $k$ points in $\mathbb{R}^n$ to $\mathcal{K}_n(k)$. This function sends $(x_1,\ldots,x_k)$ to $(\mu,\sigma)$, where
	\begin{itemize}
		\item $\mu_{ij}$ is the smallest $\lambda \in \{1,\ldots,n\}$ such that the $\lambda$-th coordinates of $x_i$ and $x_j$ are different,
		\item $\sigma_{ij}=id$ if and only if $x_i < x_j$.
	\end{itemize}
	Then $\alpha_n(m)$ is the image of the configuration of $\frac{m!}{n!(m-n)!}$ points described above through $\pi$.
\end{remark}

\begin{lemma}
	The functor $\alpha_n$ of Lemma \ref{lemmacofinalfunctor} is homotopy left cofinal.
\end{lemma}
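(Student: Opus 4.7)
The plan is to apply Quillen's Theorem A (\cite[Theorem A]{quillenthma}): it suffices to prove that for every $c \in \Delta\mathcal{K}_n$, the slice category $\alpha_n/c$ is contractible. By Definition \ref{definitioncategorydeltap} and Proposition \ref{propositionpplusstar}, the object $c$ is an operation of $\mathcal{K}_n$, say $c = (\mu,\sigma) \in \mathcal{K}_n(k)$, and a morphism $\alpha_n(m) \to c$ in $\Delta\mathcal{K}_n$ is a black and white $\mathcal{K}_n$-tree $T$ whose unique white vertex is decorated by $\alpha_n(m)$, equipped with an extra morphism $t(T) \to c$. Since each $\mathcal{K}_n(j)$ is a poset, the extra morphism is unique when it exists and amounts to the inequality $t(T) \leq c$.

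My proof would proceed by induction on $k$, closely modelled on the proof of Lemma \ref{lemmatechnical}. In the base case $k = 0$ the operation $c$ is the unique nullary operation of $\mathcal{K}_n$, and I would exhibit the object with $m = 0$ and $T$ the corolla whose only vertex is the white vertex decorated by $\alpha_n(0)$ as a terminal object of $\alpha_n/c$, giving contractibility immediately.

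For the inductive step, set $i := \sigma^{-1}(1)$ and let $c' \in \mathcal{K}_n(k-1)$ be obtained from $c$ by deleting the $i$-th input. Using the operadic structure of $\mathcal{K}_n^+_*$ I would construct a functor $G : \alpha_n/c \to \alpha_n/c'$ whose effect on $(m,T)$ is to ``forget the leaf of $T$ corresponding to the $i$-th output''. The choice $i = \sigma^{-1}(1)$ guarantees that this leaf is extremal in the sense needed for the operation to remain well-defined and to still satisfy the target inequality against $c'$. By the induction hypothesis the target $\alpha_n/c'$ is contractible, so contractibility of $\alpha_n/c$ follows from homotopy left cofinality of $G$, which I would prove pointwise: for each $x \in \alpha_n/c'$, I would exhibit the fibre $G_x$ as a reflective subcategory of the slice $G/x$ by constructing an explicit left adjoint to the canonical inclusion $G_x \hookrightarrow G/x$, and then show that $G_x$ has a terminal object obtained by canonically reinserting the deleted leaf using the lattice structure of $\alpha_n(m)$. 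Together these imply that every slice $G/x$ is contractible, as in the argument of Lemma \ref{lemmatechnical}.

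The main obstacle will be the combinatorics of the left adjoint and the terminal object of $G_x$: one must describe precisely how insertion/deletion of the $i$-th leaf interacts with both the edge-colouring of $\alpha_n(m)$ produced by $\alpha_n$ and the black/white tree structure of $T$, and verify that the extra condition $t(T) \leq c$ lifts through these operations. Once the correct canonical reinsertion is identified — which is forced by the ``lexicographic'' nature of $\alpha_n$ described in Lemma \ref{lemmacofinalfunctor} — the adjunction and terminality are routine to check, and the theorem follows.
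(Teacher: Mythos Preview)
Your approach is genuinely different from the paper's. The paper does \emph{not} argue by induction on the arity $k$ of $c$. Instead, for fixed $c$ it singles out the full subcategory $\mathcal{C}_0 \subset \alpha_n/c$ of those trees whose unique white vertex sits at the root, shows directly that $\mathcal{C}_0$ has an \emph{initial} object (namely $m = n$, with white root decorated by the unit $\alpha_n(n) \in \mathcal{K}_n(1)$ and a single black vertex above decorated by $c$), and then constructs an explicit left adjoint to the inclusion $\mathcal{C}_0 \hookrightarrow \alpha_n/c$. That left adjoint pushes a black root vertex up past the white vertex by partitioning its incoming edges into $2n$ groups according to their edge colour and orientation relative to the edge carrying the white vertex; this is exactly where the specific lexicographic combinatorics of $\alpha_n$ are used. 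No induction, and no appeal to a deletion-reinsertion functor, is needed.

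Your inductive scheme might be completable, but the analogy with Lemma \ref{lemmatechnical} is looser than you suggest, and the part you defer as ``routine'' is where the content lies. In Lemma \ref{lemmatechnical} the objects carry $k$ white vertices or leaves and removing one simply drops the count to $k-1$; here every object has a \emph{single} white vertex with the constrained decoration $\alpha_n(m)$, and your $G$ caps a leaf without changing $m$. The morphisms in the slice $\alpha_n/c'$ are morphisms of $\Delta$, not direct tree manipulations, so tracking ``where the capped leaf goes'' under a map $G(y) \to x$ --- which is what you need to build the left adjoint $G/x \to G_x$ --- requires unwinding the composite $T'_x \circ \alpha_n(f)$ and is not parallel to the black-vertex tracking of Lemma \ref{lemmatechnical}. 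Likewise, the terminal object of $G_x$ is not forced by the lexicographic structure of $\alpha_n(m)$ alone: the reinserted leaf must be placed correctly relative to the black vertices of $T'$ and still satisfy $t(T) \leq c$. Until these two constructions are written down, the inductive step is not established.
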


\begin{proof}
	Assume we have an object of $\Delta\mathcal{K}_n$, given by $k \geq 0$ and $p \in \mathcal{K}_n(k)$. To simplify the notation, let $\mathcal{C} := \alpha_n / p$. We want to prove that $\mathcal{C}$ is contractible. The objects of $\mathcal{C}$ are given by $n \geq 0$ together with a morphism $\alpha_n(m) \to p$. Such morphism is given by a black and white $\mathcal{K}_n$-tree with a unique white vertex decorated with $\alpha_n(m)$, together with an extra morphism from the target of this tree to $p$.
	
	Let $\mathcal{C}_0$ be the full subcategory of $\mathcal{C}$ of black and white $\mathcal{K}_n$-trees whose root vertex is the unique white vertex. $\mathcal{C}_0$ is contractible because it has an initial object. Note that $\alpha_n(n) \in \mathcal{K}_n(1)$ is the unit of the operad $\mathcal{K}_n$. The initial object of $\mathcal{C}_0$ is given by $m=n$ and the morphism $\alpha_n(n) \to p$ is the following $\mathcal{K}_n$-tree. The root vertex is white and decorated with the unit of $\mathcal{K_n}$. It is a unary vertex and the unique vertex above it is black and decorated with $p$. The extra morphism is the identity on $p$.
	
	Now let us prove that $\mathcal{C}_0$ is a reflective subcategory of $\mathcal{C}$. Let us define the left adjoint $L$ to the inclusion $\mathcal{C}_0 \to \mathcal{C}$. Assume that we have an object $c \in \mathcal{C}$ given by $m \geq 0$ and a morphism $x: \alpha_n(m) \to p$. Let $r$ be the root vertex of the $\mathcal{K}_n$-tree given by the morphism $x$. If $r$ is white, then we can define $L(c) := c$. Otherwise, $r$ is black. In this case, let $l$ be the number of edges above $r$. $r$ is decorated with an object $(\mu,\sigma) \in \mathcal{K}_n(l)$. Let $i \in \{1,\ldots,l\}$ be such that the $i$-th edge above $r$ is below the unique white vertex of the $\mathcal{K}_n$-tree $x$. We have a partition of the set $\{1,\ldots,l\}$ into $2n+1$ subsets. The first subset is given by $\{i\}$ the $2n$ other subsets are given as follows. For $\mu' \in \{1,\ldots,n\}$ and $\sigma' \in \Sigma_2$, the subset corresponding to the pair $(\mu',\sigma')$ contains all the elements $j \in \{1,\ldots,l\}$ such that $j \neq i$, $\mu_{ij}=\mu'$ and $\sigma_{ij}=\sigma'$. Also, for a pair $(\mu',\sigma')$ as above, there is an operation of $o_{\mu',\sigma'}$ of $\mathcal{K}_n$ given by restricting the complex graph $(\mu,\sigma)$ to the subset corresponding to $(\mu',\sigma')$. Now we define the black and white $\mathcal{K}_n$-tree $L(c)$ as follows. The root vertex is the unique white vertex and is decorated with $\alpha_n(m+2)$. Each edge above it corresponds to a subset of $\{0,\ldots,m+1\}$ with $n$ elements. Let $S=\{i_1,\ldots,i_n\}$ be such subset. First we consider the case where $S$ does not contain $0$ or $n+1$. Then the black vertex above the edge corresponding to $S$ is defined as the black vertex above the edge corresponding to $S$ in $c$. Now we consider the case where $S$ is of the form $\{0,\ldots,n\} \backslash \{\mu'\}$ for some $\mu' \in \{1,\ldots,n\}$. Then the black vertex above the edge corresponding to $S$ is decorated with $o_{\mu',id}$. Next, we consider the case where $S$ is of the form $\{m-n+1,\ldots,m+1\} \backslash \{m-n+\mu'\}$ for some $\mu' \in \{1,\ldots,n\}$. Then the black vertex above the edge corresponding to $S$ is decorated with $o_{\mu',(12)}$. Finally, we consider the case where $S$ contains $0$ or $n+1$ but is not contained in $\{0,\ldots,n\}$ or $\{m-n+1,\ldots,m+1\}$. Then the black vertex above the edge corresponding to $S$ is nullary. This defines $L(c)$. The unit of the adjunction is given by the morphism $m \to m+2$ in $\Delta$ sending $i$ to $i+1$. The counit is the identity. In conclusion, $\mathcal{C}_0$ is a reflective subcategory of $\mathcal{C}$ and $\mathcal{C}_0$ is contractible. This proves that $\mathcal{C}$ is also contractible.
\end{proof}

\subsection{Equivalence between the two definitions of Hochschild cochains}

\begin{theorem}\label{theoremequivalencehochschild}
	Let $A$ be a commutative algebra. Then the Hochschild object $\mathrm{Hoch}(A)$ of Definition \ref{definitionhochschildobject}, when $A$ is seen as a $\mathcal{K}_n$-algebra, coincides with the Hochschild-Pirashvili cochain of order $n$ of $A$.
\end{theorem}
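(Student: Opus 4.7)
The plan is to use the homotopy left cofinal functor $\alpha_n: \Delta \to \Delta\mathcal{K}_n$ from the preceding lemma to transfer the computation of $\mathrm{Hoch}(A) = \underline{\mathrm{Hom}}_{\Delta\mathcal{K}_n}(\delta, \mathrm{End}_A)$ from $\Delta\mathcal{K}_n$ down to $\Delta$, and then to recognise the resulting cosimplicial object as the one defining the Hochschild-Pirashvili cochain. First I would argue that restriction along $\alpha_n$ induces a weak equivalence
\[
\underline{\mathrm{Hom}}_{\Delta\mathcal{K}_n}(\delta, \mathrm{End}_A) \xrightarrow{\sim} \underline{\mathrm{Hom}}_\Delta(\alpha_n^* \delta, \alpha_n^* \mathrm{End}_A),
\]
invoking the homotopy invariance of enriched ends (weighted limits) under pullback along homotopy left cofinal functors, in the same spirit as the use of Quillen's Theorem A in the proof of Lemma \ref{lemmatechnical}.

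Next, I would identify the pulled-back diagram $\alpha_n^* \mathrm{End}_A$ on $\Delta$ with $\mathcal{L}(A,A) \circ S^n$. Level-wise, $\alpha_n(m) \in \mathcal{K}_n(\binom{m}{n})$ and, since $A$ is commutative, the $\mathcal{K}_n$-algebra structure map $\mathcal{K}_n \to \mathrm{End}_A$ factors through the commutative operad; hence $\mathrm{End}_A(\alpha_n(m)) \cong \mathrm{Hom}(A^{\otimes \binom{m}{n}}, A)$, which by strong monoidality of $\mathcal{L}(A,A): (\mathrm{FSet}_*, \vee) \to (\mathrm{Vect}, \otimes_A)$ agrees with $\mathcal{L}(A,A)(\binom{m}{n} \sqcup 1) = \mathcal{L}(A,A)(S^n(m))$ via the canonical bijection of Remark \ref{remarksimplicialsphere}. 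For a morphism $f: l \to m$ in $\Delta$, the explicit description of $\alpha_n(f)$ in the proof of Lemma \ref{lemmacofinalfunctor}---as a black-and-white $\mathcal{K}_n$-tree with root vertex corresponding to the fibre $g^{-1}(*)$ (where $g := S^n(f)$), a white vertex above the $*$-strand decorated by $\alpha_n(l)$, and a black vertex above each $S$-edge exactly when $|g^{-1}(S)| > 1$---directly mirrors the formula for $S^n(f)$ in Remark \ref{remarksimplicialsphere}. Commutativity of $A$ then ensures that the induced maps are precisely those obtained from the generating morphisms $\mu$, $\epsilon$, $\rho$ of $\mathrm{FSet}_*$ through the functor $\mathcal{L}(A,A)$.

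Granted the cosimplicial identification $\alpha_n^* \mathrm{End}_A \cong \mathcal{L}(A,A) \circ S^n$, an analogous identification of $\alpha_n^* \delta$ with the canonical cosimplicial object on $\Delta$ (extending the example following Definition \ref{definitionhochschildobject}) shows that the enriched hom on the right computes the totalisation (cochain complex) of $\mathcal{L}(A,A) \circ S^n$, which is by Definition \ref{definitionhochschildpirashvili} the Hochschild-Pirashvili cochain of order $n$ of $A$. The main obstacle I anticipate is the combinatorial bookkeeping required to match the cosimplicial operators on the nose, particularly the correspondence between the basepoint of $S^n(m)$ and the root black vertex of $\alpha_n(f)$, and between iterated multiplications and units of $A$ and the contractions of edges in $\mathcal{K}_n$-trees; each local identification is transparent from the explicit constructions, but verifying global naturality demands sustained care with the $\mathcal{K}_n$-tree combinatorics.
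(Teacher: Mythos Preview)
Your proposal is correct and follows essentially the same route as the paper: use cofinality of $\alpha_n$ to transport the weighted hom from $\Delta\mathcal{K}_n$ down to $\Delta$, identify the resulting cosimplicial object with $\mathcal{L}(A,A)\circ S^n$, and then recognise the totalisation as the Hochschild--Pirashvili cochain. The paper organises the first step slightly more economically: rather than pulling back both $\delta$ and $\mathrm{End}_A$ and then separately identifying $\alpha_n^*\delta$ with the canonical weight on $\Delta$, it defines $\mathrm{Tot}(X):=\underline{\mathrm{Hom}}_{\mathcal{C}}(\delta_{\mathcal{C}},X)$ intrinsically for each indexing category $\mathcal{C}$ and invokes Hirschhorn's cofinality theorem for homotopy limits directly to get $\mathrm{Tot}_{\Delta\mathcal{K}_n}(\mathrm{End}_A)\simeq\mathrm{Tot}_{\Delta}(\mathrm{End}_A\circ\alpha_n)$, so the weight-identification you flag as a step is absorbed into the cited theorem; otherwise the argument and its ingredients (the commuting square $\mathrm{End}_A\circ\alpha_n\cong\mathcal{L}(A,A)\circ S^n$ and conormalisation as totalisation) are the same.
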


\begin{proof}
	The following square commutes:
	\begin{equation}\label{equationcommutativesquare}
		\xymatrix{
			\Delta \ar[r]^-{S^n} \ar[d]_-{\alpha_n} & \mathrm{FSet}_*^{op} \ar[d]^-{\mathcal{L}(A,A)} \\
			\Delta \mathcal{K}_n \ar[r]_-{\mathrm{End}_A} & \mathrm{Cat}
		}
	\end{equation}
	For any small category $\mathcal{C}$ and any functor $X: \mathcal{C} \to \mathrm{Cat}$, let $\delta: \mathcal{C} \to \mathrm{Cat}$ be the functor given by $\delta(c) := \mathcal{C}/c$ and
	\[
		\mathrm{Tot}(X) := \underline{\mathrm{Hom}}_\mathcal{C} (\delta,X).
	\]
	We have
	\begin{equation}\label{equationhochschild}
		\mathrm{Hoch}(A) = \mathrm{Tot} ( \mathrm{End}_A ) = \mathrm{Tot} (\mathrm{End}_A \cdot \alpha_n) = \mathrm{Tot}\left(\mathcal{L}(A,A) \cdot S^n\right),
	\end{equation}
	where the first equality is by Definition \ref{definitionhochschildobject} of $\mathrm{Hoch}(A)$, the second one uses Lemma \ref{lemmacofinalfunctor} and \cite[Theorem 19.6.7 (2)]{hirschhorn}, and the last one is thanks to the commutativity of the square \eqref{equationcommutativesquare}. The conclusion follows from the fact that conormalization can be viewed as totalization \cite[Section 3.11]{BataninBergerLattice}. This means that the last term in \eqref{equationhochschild} coincides with Definition \ref{definitionhochschildpirashvili}.
\end{proof}

\begin{corollary}
	For a commutative algebra $A$, there is a $\mathcal{K}_{n+1}$-action on the Hochschild-Pirashvili cochain of order $n$ of $A$.
\end{corollary}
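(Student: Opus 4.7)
The plan is to assemble the ingredients established in the preceding sections. A commutative algebra $A$ is canonically an algebra over the commutative operad $\mathrm{Com}$, and via the unique morphism $\mathcal{K}_n \to \mathrm{Com}$ it acquires the structure of a $\mathcal{K}_n$-algebra for every $n \geq 1$. This puts us into the hypotheses of Corollary \ref{corollaswisscheeseaction}, so I would begin by invoking it to obtain an $\mathcal{RK}_n$-algebra structure on the pair $(\mathrm{Hoch}(A),A)$.

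Next, I would unpack what such an $\mathcal{RK}_n$-action gives on the $\mathsf{f}$-coloured component. Since $\mathcal{RK}_n$ is a Swiss cheese type operad whose full suboperad on the colour $\mathsf{f}$ is by definition $\mathcal{K}_{n+1}$ (indeed $\mathcal{RK}_n(\mathsf{f},\ldots,\mathsf{f};\mathsf{f}) = \mathcal{K}_{n+1}(k)$), the restriction of the $\mathcal{RK}_n$-algebra structure to the full suboperad endows $\mathrm{Hoch}(A)$ with a $\mathcal{K}_{n+1}$-algebra structure, as in Definition \ref{definitionswisscheeseaction}.

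Finally, I would apply Theorem \ref{theoremequivalencehochschild} to identify $\mathrm{Hoch}(A)$, defined abstractly via Definition \ref{definitionhochschildobject}, with the Hochschild-Pirashvili cochain of order $n$ of $A$ in the classical sense of Definition \ref{definitionhochschildpirashvili}. Transporting the $\mathcal{K}_{n+1}$-action along this identification yields the desired $\mathcal{K}_{n+1}$-action on the Hochschild-Pirashvili cochain.

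No new technical input is required: the corollary is a direct assembly of Corollary \ref{corollaswisscheeseaction} (which itself packages Theorem \ref{theoremswisscheese} together with Lemma \ref{lemmaalgebra}) and Theorem \ref{theoremequivalencehochschild}. The only minor point worth spelling out is that the full suboperad $(\mathcal{RK}_n)_\mathsf{f}$ is literally $\mathcal{K}_{n+1}$, so no further weak equivalence is needed at the last step; this is immediate from the definition of $\mathcal{RK}_n$ recalled in Section \ref{sectionpreliminaries}.
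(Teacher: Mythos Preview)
Your proposal is correct and follows essentially the same route as the paper: invoke Corollary~\ref{corollaswisscheeseaction} to get the $\mathcal{RK}_n$-action on the pair, restrict to the $\mathsf{f}$-coloured suboperad (which is $\mathcal{K}_{n+1}$) to obtain the action on $\mathrm{Hoch}(A)$, and then identify $\mathrm{Hoch}(A)$ with the Hochschild--Pirashvili cochain via Theorem~\ref{theoremequivalencehochschild}. Your write-up is in fact more careful than the paper's own proof, which contains a couple of evident slips (it writes $\mathcal{RK}_{n+1}$ where $\mathcal{RK}_n$ is meant, and says the $\mathcal{K}_{n+1}$-action is on $A$ rather than on $\mathrm{Hoch}(A)$).
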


\begin{proof}
	According to Corollary \ref{corollaswisscheeseaction}, there is an action of $\mathcal{RK}_{n+1}$ on the pair $(A,\mathrm{Hoch}(A))$. In particular, there is a $\mathcal{K}_{n+1}$-action on $A$. The conclusion follows from Theorem \ref{theoremequivalencehochschild}.
\end{proof}

\bibliographystyle{plain}
\bibliography{swiss-cheese}

\end{document}